\documentclass[12pt]{amsart}

\usepackage{amssymb}
\usepackage{amscd}
\usepackage{hyperref}

\title[A far-flung Gorenstein numerical semigroup attaining the HKS bound]
{Existence of a far-flung Gorenstein numerical semigroup attaining the Herzog--Kumashiro--Stamate bound} 
\author{Akihiro Sugawara}

\subjclass[2020]{13H10, 20M14, 20M25}
\keywords{Numerical semigroup, Far-flung Gorenstein, Reduced type, Maximal reduced type}
\address{Graduate School of Science and Engineering, Yamagata University, Kojirakawa-machi 1-4-12, Yamagata 990-8560, Japan} 
\email{s251436d@st.yamagata-u.ac.jp}

\newtheorem{theorem}{Theorem}
\newtheorem*{theorem*}{Theorem}

\newtheorem{fact}{Fact} 
\newtheorem*{fact*}{Fact} 
\newtheorem{lemma}{Lemma} 
\newtheorem{proposition}{Proposition}
\newtheorem{corollary}{Corollary} 

\newtheorem{question}{Question}
\newtheorem*{question*}{Question}

\theoremstyle{definition}
\newtheorem{definition}{Definition}
\newtheorem{example}{Example} 
\newtheorem{remark}{Remark}

\newtheorem*{Rohrbach}{The Rohrbach problem}

\def\e{{\mathrm e}}
\def\g{{\mathrm g}}

\def\l{{\mathrm{l}}}
\def\m{{\mathrm m}}
\def\n{{\mathrm n}}
\def\t{{\mathrm t}}

\def\F{{\mathrm F}}

\def\PF{{\mathrm {PF}}}
\def\rPF{{\mathrm {rPF}}}

\def\msg{{\mathrm{ msg }}}

\def\N{\mathbb{N}}

\def\Z{\mathbb{Z}}

\def\l{\mathrm{l}}

\def\int{\mathrm{int}}

\def\Ks{K(S)}
\def\Cs{C(S)}
\def\Ms{M(S)}

\def\tr{\mathrm{tr}}
\def\trK{\tr(\Ks)}
\def\res{\mathrm{res}}

\def\ress{\res(S)}

\def\ovn{\overline{n}}
\def\smz {\setminus\{0\}}

\begin{document}
\begin{abstract}
    We address a question posed by Herzog, Kumashiro, and Stamate 
    regarding the upper bound for the multiplicity of any far-flung Gorenstein numerical semigroup. 
    We answer this question in the affirmative 
    by presenting a method for constructing certain far-flung Gorenstein numerical semigroups with maximal reduced type. 
    Furthermore, using this method, we consider a question raised by Herzog, Hibi, and Stamate.
    More precisely, for any integer $t\ge5$, we construct a certain far-flung Gorenstein numerical semigroup with type $t$, which is a counterexample to the question.
\end{abstract}

\maketitle

\section*{Introduction}
The far-flung Gorenstein property was introduced by Herzog, Kumashiro, and Stamate  
to study one-dimensional Cohen--Macaulay rings whose
trace ideal of the canonical module is as small as possible (see \cite{zbMATH07648549}). 
Let $S$ be a numerical semigroup, and let $k[[S]]:=k[[t^s \mid s\in S]]$ 
be the numerical semigroup ring associated to $S$, where $k$ is a field. 
We say that $S$ is far-flung Gorenstein if $k[[S]]$ is far-flung Gorenstein. 

It is well-known that for any numerical semigroup $S \neq \N$,
the multiplicity $\m(S)$ of $S$ 
is at least $\t(S)+1$, where $\t(S)$ is the type of $S$. 
On the other hand, there is no upper bound for the multiplicity of a numerical semigroup $S \neq \N$ 
in terms of its type in general (for example, see \cite[Proposition 1]{sugawara2025classificationsymmetricnumericalsemigroups}). 
Surprisingly, it is shown in \cite{zbMATH07648549} that there exists
an upper bound for the multiplicity of any far-flung Gorenstein numerical semigroup,
which comes from the Rohrbach problem (see {\cite{zbMATH03023950}, \cite{zbMATH06223909}}) in additive number theory as follows:

\begin{fact}[{\cite[Corollary 5.3]{zbMATH07648549}}]\label{fact1}
    Let $S$ be a far-flung Gorenstein numerical semigroup. Then
    \[
\m(S) \le \ovn(\t(S)),
    \]
    where $\ovn(\t(S))$ is the solution of the Rohrbach problem for the type $\t(S)$ of $S$. 
\end{fact}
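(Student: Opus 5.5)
We recall the setup. $S$ is far-flung Gorenstein if the trace of the canonical module of $k[[S]]$ equals the conductor, i.e.\ $\trK = \Cs$. Here $\Ks=\{\F(S)-x \mid x\in\Z\setminus S\}$ is the standard canonical ideal and $\Cs = \F(S)+1+\N$. The trace $\trK$ is the semigroup ideal $\Ks+(S-\Ks)$, generated by sums $a+b$ with $a\in \Ks$ and $b\in S-\Ks$.

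The plan is to translate the equality $\trK=\Cs$ into an additive-basis statement for the residues modulo the multiplicity, and then invoke the definition of the Rohrbach number $\ovn(t)$: the largest $n$ for which there is a $t$-element set $A\subset\Z$ with $\{0,\dots,n-1\}\subseteq A+A$ (or $A-A$, depending on normalization).

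\textbf{Step 1 (pseudo-Frobenius elements give the type).} Write $\PF(S)=\{f_1<\dots<f_t=\F(S)\}$, so $t=\t(S)$. Then $\Ks$ is generated as an $S$-ideal by $\{\F(S)-f_i\}_{i=1}^t$. Dually, $S-\Ks$ is governed by the same $t$ elements: $x\in S-\Ks$ iff $x+\F(S)-f_i\in S$ for all $i$. Hence $\trK$ is generated by elements of the form $(\F(S)-f_i)+b$ with $b\in S-\Ks$, and in particular $f_j-f_i+\F(S)$-type combinations appear. Concretely, one standard computation gives
\[
\trK = \bigcup_{i,j} \bigl(f_i - f_j + S\bigr)\cap S \text{-generated pieces},
\]
and I would make precise that the minimal generators of $\trK$ have the form $\F(S)-f_i+g$ for suitable $g$.

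\textbf{Step 2 (far-flung forces small elements to be differences).} Use the condition $\trK=\Cs$ on the interval $\F(S)+1,\dots,\F(S)+\m(S)$. These $\m(S)$ consecutive integers lie in $\Cs$, so each must lie in $\trK$. Moreover, each is a minimal generator of $\Cs$ as an ideal modulo $\m(S)$ translates, because $\F(S)+j-\m(S)\notin\Cs$. Hence each $\F(S)+j$ with $1\le j\le \m(S)$ must be expressible as $(\F(S)-f_i)+(\text{element of } S-\Ks)$ with the second summand essentially minimal. I would show the relevant elements of $S-\Ks$ lying in the needed range are also of the form $f_k+\text{const}$, relying on the fact that $\trK\supseteq \Cs$ forces $S-\Ks$ to contain many elements close to pseudo-Frobenius elements. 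This yields that every residue $0,1,\dots,\m(S)-1$ (shifted) is covered by differences $f_k-f_i$ (plus fixed constants), with indices among $t$ values. Equivalently, $\{0,\dots,\m(S)-1\}\subseteq A-A$ or $A+A$ for a $t$-element set $A$ built from $\PF(S)$.

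\textbf{Step 3.} By definition of $\ovn(t)$, it follows that $\m(S)\le \ovn(\t(S))$.

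The main obstacle is Step 2: identifying exactly which elements of $S-\Ks$ can contribute in the window $[\F(S)+1,\F(S)+\m(S)]$. My first attempt is to show that any $b\in S-\Ks$ with $b<\F(S)+\m(S)-\min(\F(S)-f_i)$ must equal $f_k$ minus something in $S$. This would give the form $f_k-f_i$ modulo $S$-translates by $\geq \m(S)$, which cannot land in the window. Doing so uses that $f_k$ are maximal in $\Z\setminus S$ under $\le_S$. If this exact form fails, I would instead count residues mod $\m(S)$: there are at most $t^2$ pairs, and the pairs must hit all $\m(S)$ residues. Refining that count to the Rohrbach structure is then the crux.
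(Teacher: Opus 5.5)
There is a genuine gap, and it lies exactly where you apply the hypothesis. In Step 2 you only use that each $\F(S)+j$ lies in $\trK$, which is the inclusion $\Cs\subseteq\trK$. That inclusion holds for \emph{every} numerical semigroup (Fact~\ref{fact2}), so your argument never uses far-flungness. No argument of that kind can work: $\langle m,m+1\rangle$ is symmetric of type $1$ with arbitrarily large multiplicity, while $\ovn(1)=1$.

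The window is also degenerate. We have $0=\F(S)-\F(S)\in\Ks$ and $\Cs\subseteq S-\Ks$ always, so $\F(S)+j=0+(\F(S)+j)$ is a trivial representation and forces no structure on $\PF(S)$. For far-flung $S$ one even has $S-\Ks=\Cs$, so your hoped-for description of the relevant elements of $S-\Ks$ as ``$f_k$ plus a constant'' has no content. The information in $\trK=\Cs$ is the \emph{exclusion} $\trK\cap[0,\F(S)]=\emptyset$.

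The hedge ``$A-A$ or $A+A$'' is not harmless either. $\ovn(t)$ is defined through $A+A$, and a covering of $[0,\m(S)-1]$ by differences $f_k-f_i$ would not give $\m(S)\le\ovn(t)$. Your fallback of counting pairs only gives a bound of order $t^2$.

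The missing idea is the characterization behind the paper's derivation (Fact~\ref{fact3}): $S$ is far-flung iff $[0,\m(S)-1]\subseteq A+A$ for $A=\{\F(S)-f\mid f\in\PF(S)\}$. Since $|A|=\t(S)$ and $0\in A$, this gives $\m(S)\le n(A)\le\ovn(\t(S))$ at once. You can derive the needed direction from your Step~1 as follows.

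Because $0\in\Ks\subseteq\N$, we have $S-\Ks\subseteq S$ and $\min\trK=\min(S-\Ks)$. Hence $\trK=\Cs$ iff no $z\in S\cap[0,\F(S)]$ lies in $S-\Ks$. Take $x\in[0,\m(S)-1]$ and set $z=\F(S)-x\ge 0$.
If $z\notin S$, then $x\in\Ks$, so $x=x+0\in\Ks+\Ks$.
If $z\in S$, far-flungness gives $k\in\Ks$ with $z+k\notin S$. Then $k\le\F(S)$, so $k=\F(S)-y$ for a gap $y\ge 0$, and $y':=z+k$ is a gap. This gives $x=(\F(S)-y)+(\F(S)-y')\in\Ks+\Ks$.

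Finally, every element of $\Ks$ that is at most $\F(S)$ has the form $\F(S)-f+s$ with $f\in\PF(S)$ and $s\in S$, because each gap is $\le_S$ some pseudo-Frobenius number. Since $x<\m(S)$, both $s$-parts must be $0$, so $x\in A+A$. It is therefore \emph{sums} $(\F(S)-f)+(\F(S)-g)$, i.e.\ the condition $\Ks+\Ks\supseteq\N$, and not differences of pseudo-Frobenius numbers, that produce the Rohrbach structure.
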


In the same paper, Herzog, Kumashiro, and Stamate raised the following question:

\begin{question}[{\cite[Question 5.5]{zbMATH07648549}}]\label{question1}
    Is it true that for any $t \ge 2$, there exists a far-flung Gorenstein numerical semigroup $S$ of type t with $\m(S)=\overline{n}(t)$?
\end{question}

This paper answers this question affirmatively by proving the following theorem.

\begin{theorem*}[{Theorem \ref{Main}}]
    For any integer $t\ge 2$, there exists a far-flung Gorenstein numerical semigroup
    such that $\t(S)=t$ and $\m(S)=\ovn(\t(S))$. 
\end{theorem*}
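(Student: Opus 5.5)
The plan is to reverse the argument behind Fact~\ref{fact1}. That proof shows that for a far-flung Gorenstein $S$ with $\t(S)=t$, the (shifted) pseudo-Frobenius numbers form a $t$-element set whose difference set covers an interval of length $\m(S)$. So I would start from an extremal configuration for the Rohrbach problem and build a semigroup realizing it. Concretely, I fix $t\ge 2$, put $n=\ovn(t)$, and choose a Rohrbach-extremal set $A=\{0=a_1<a_2<\dots<a_t\}$ whose differences $A-A$ cover $\{0,1,\dots,n-1\}$. I then aim for a numerical semigroup $S$ with
\[
\m(S)=n \quad\text{and}\quad \PF(S)=\{\F(S)-a_i \mid 1\le i\le t\}.
\]
Equivalently, the canonical ideal $\Ks$ should be generated, as an $S$-ideal, by $0=a_1,a_2,\dots,a_t$ (up to translation).

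For the construction I take $\F$ large compared with $a_t$ and $n$. I define $S$ to contain $0$, all multiples of $n$, and every integer $\ge \F+1$. Inside the strip just below $\F$, I put in exactly the integers $x$ for which $\F-x$ is not of the form $a_i+(\text{element of }S)$. The elements of $S$ in that range are then read off residue class by residue class modulo $n$, using the Apéry set with respect to $n$. This lets me check, in order:
\begin{itemize}
\item $S$ is closed under addition, because the Apéry set elements all lie in a window of length less than $n$ near $\F$;
\item no positive element of $S$ is smaller than $n$, so $\m(S)=n$;
\item the maximal elements of the Apéry set with respect to $\preceq_S$ are exactly $\F-a_i+n$, so $\PF(S)=\F-A$ and $\t(S)=t$.
\end{itemize}

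The remaining step is the far-flung Gorenstein property. For numerical semigroups this amounts to $\trK=\Ks+(S-\Ks)$ being as small as possible, namely equal to the conductor ideal $\Cs=\F+1+\N$. With $\Ks$ generated by $A$, the ideal $S-\Ks$ consists of the $x$ with $x+a_i\in S$ for all $i$. I need to show that $\Ks+(S-\Ks)$ contains no element below $\F+1$, and that its minimum is exactly $\F+1$. Here the covering property $A-A\supseteq\{0,\dots,n-1\}$ is used. For each residue class, some difference $a_i-a_j$ pins down which element of $S-\Ks$ can appear, and this forces every sum $a_i+x$ with $x\in S-\Ks$ to be at least $\F+1$. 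Conversely, elements $\ge \F+1$ are in the trace because $\Cs\subseteq\trK$ always holds.

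I expect this last verification to be the main obstacle, together with making the Apéry-set bookkeeping consistent: one must keep $\m(S)=n$ (the multiplicity must not drop) while killing every would-be element of the trace below the conductor. If the direct definition fails, I would first try spacing the gaps by taking $\F\equiv -1 \pmod n$ with $\F$ of size about $2n+a_t$. Only then would I adjust which residues receive their Apéry element at $\F-a_i+n$ versus $\F-a_i+2n$.
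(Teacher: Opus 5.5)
There is a genuine gap, and it begins with the combinatorial input. The criterion behind Fact~\ref{fact1} (Fact~\ref{fact3}, Lemma~\ref{lemma1}) is a \emph{sumset} condition, $[0,\m(S)-1]\subseteq\{(\F(S)-f)+(\F(S)-g)\mid f,g\in\PF(S)\}$. Likewise, $\ovn(t)$ is defined through $A+A$, not $A-A$. A $t$-element set with $A-A\supseteq[0,\ovn(t)-1]$ need not exist. For $t=2$, a two-element set has one positive difference, but you need both $1$ and $2$. For $t=3$, a three-element set has three positive differences, but you need $1,\dots,4$. So the object your construction starts from does not exist, and the proposed mechanism for far-flungness (differences $a_i-a_j$ pinning down residue classes) is not the right one. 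With sums, far-flungness is actually the easy step, and it needs only $n\in S$ and $\F-a\notin S$ for $a\in A$. Suppose some $z\in S-\Ks$ had $z\le\F$. Translating by a multiple of $n$ gives such a $z$ with $\F-z\in[0,n-1]$. Writing $\F-z=a_i+a_j$ gives $z+a_i=\F-a_j\notin S$, although $a_i\in\Ks$, a contradiction. Hence $S-\Ks\subseteq\Cs$, and so $\trK=\Cs$.

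The substantive difficulty lies elsewhere, and your proposal only asserts it. Your $S$ is never actually defined: the rule for the strip below $\F$ refers to $S$ itself. The claim that the $\preceq_S$-maximal Ap\'ery elements are exactly $\F-a_i+n$, i.e.\ that $\t(S)=t$ and not larger, is the heart of the statement. Your closure argument as written even contradicts it. The element $\F+n$ always lies in the Ap\'ery set, so a window of length less than $n$ containing all nonzero Ap\'ery elements must lie above $\F$. Then all $n-1$ non-multiples of $n$ in $[\F-n+1,\F]$ are gaps, hence pseudo-Frobenius numbers. This gives $\t(S)\ge n-1>t$ as soon as $t\ge3$, since $\ovn(t)\ge t+2$ there.

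The paper fixes the construction as $S=S(n,A)=\Delta(n)\setminus\{2n-1-a\mid a\in A\}$ with $n=n(A)$; this is your fallback $\F\equiv-1\pmod n$, with $\F=2n-1$. Closure is then trivial, $\m(S)=n$, and $\rPF(S)=\{\F-a\mid a\in A\}$, so far-flungness is just Lemma~\ref{lemma1}. The real work (Theorem~\ref{theorem1}, Corollary~\ref{corollary3}) is excluding pseudo-Frobenius numbers $x\in[1,n-1]$:
\begin{itemize}
\item Such an $x$ must lie in $A\smz$, since otherwise $\F-x\in S\smz$ and $x+(\F-x)=\F\notin S$.
\item Then $n(A)-x\notin A$, else $n(A)\in A+A$. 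So $n(A)-x=a+b$ with $a,b\in A\smz$.
\item Now $a+x=n(A)-b\in[1,n-1]\setminus A$. Hence $(\F-a)-x\in S\smz$, while $\F-a\notin S$, so $x\notin\PF(S)$.
\end{itemize}
This argument uses $\max A+2\le n(A)$, which Lemma~\ref{lem_extremal} guarantees for extremal $A$.
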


To prove this theorem, we first show that if $S$ is a far-flung Gorenstein numerical semigroup with $\m(S)=\ovn(\t(S))$, then 
$S$ has maximal reduced type. 
From this observation, we focus on providing a method of constructing far-flung Gorenstein numerical semigroups with maximal reduced type. 
Furthermore, our method produces counterexamples to \cite[Question 2.3]{zbMATH07395083} (see Question \ref{comparison}). 
It is known that this statement holds true when $\t(S)\le3$ (see \cite[Theorem 2.4]{zbMATH07574809}), and that  
there exists a counterexample with $\t(S)=5$ (see \cite[Example 2.5]{zbMATH07574809}). 
We show that there exists a counterexample with type $t$ for any integer $t\ge5$. 
In addition, we present a counterexample to \cite[Question 2.6]{zbMATH07574809}, which corresponds to the case $\t(S)=4$ in \cite[Question 2.3]{zbMATH07395083}. 

The structure of the article is as follows. 
In Section \ref{Preliminaries}, we fix the notation, and we recall some definitions and results about numerical semigroups. 
In Section \ref{FFGwithMRT}, we recall the Rohrbach problem, 
and we provide a characterization of far-flung Gorenstein numerical semigroups 
by means of $\rPF(S)$ (see Lemma \ref{lemma1}). 
In addition, we show that if $S$ is a far-flung Gorenstein numerical semigroup with $\m(S)=\ovn(\t(S))$, 
then $S$ has maximal reduced type (see Proposition \ref{proposition1}).
In Section \ref{MainResults}, 
we prove Theorems \ref{theorem1} and \ref{Main} to answer Question \ref{question1}.  
Finally in Section \ref{res(S)}, applying the same method as in Section \ref{MainResults}, 
we provide counterexamples to \cite[Question 2.3]{zbMATH07395083} (see Proposition \ref{example_Rohrbach} and Theorem \ref{theorem_example}).
In addition, we also introduce a counterexample to \cite[Question 2.6]{zbMATH07574809} (see Example \ref{type4}).

%%%%%%%%%%%%%%%%%%%%%%%%%%%%%%%%%%%%%%%%%%%%%%%%%%%%%%%%%%%%%%%

\section{Preliminaries}\label{Preliminaries}
First, we fix the notation and recall some definitions and basic facts on numerical semigroups. 
Let $\Z$ and $\N$ denote the set of integers and the set of non-negative integers, respectively. 
The cardinality of the set $A$ is denoted by $|A|$.   
For any integers $x\le y$, 
the set of integers $\{z \in \Z\mid x \le z \le y\}$ is represented by $[x,y]$. 
We define $\langle n_1,\dots, n_e \rangle =\{\sum_{i=1}^{e} \lambda_i n_i \mid \lambda_i \in \N\}$, where $n_1,\dots,n_e$ are non-negative integers. 
A numerical semigroup $S$ is a submonoid of $(\N,+)$ such that $\N\setminus S$ is finite. 
Let $S$ be a numerical semigroup. 
The smallest element of $S \setminus \{0\}$ is called the multiplicity of $S$, and it is denoted by $\m(S)$. 
The set $\N \setminus S$ is called the set of gaps of $S$,
and its cardinality is said to be the genus $\g(S)$ of $S$. 
It is well-known that every numerical semigroup has a unique minimal system of generators, and it is denoted by $\msg(S)$. 
The cardinality $\e(S)$ of $\msg(S)$ is called the embedding dimension of $S$. 
Note that the inequality $\e(S)\le\m(S)$ holds. We say that $S$ has maximal embedding dimension (MED-semigroup) if $\e(S)=\m(S)$.

The largest integer in $\Z \setminus S$ is called the Frobenius number of $S$, and it is denoted by $\F(S)$. 
Let $\PF(S):=\{ x \in \Z \setminus  S \mid x+s \in S \text{~for all~} 0 \neq s \in S \}$ be the set of 
all pseudo-Frobenius numbers of $S$. 
The cardinality $\t(S)$ of $\PF(S)$ is said to be the type of $S$. 
We say that $S$ is symmetric 
if $x \in \Z \setminus S$ implies that $\F(S)-x \in S$, or 
equivalently, if $\t(S)=1$. 
We also define
\[
    \rPF(S) = [\F(S)-\m(S)+1,\F(S)]\setminus S. 
\]
The cardinality of $\rPF(S)$ is called the reduced type $s(S)$ of $S$ (see \cite{zbMATH07423434}, \cite{arXiv:2306.17069}, \cite{sugawara2025classificationsymmetricnumericalsemigroups}).
Note that $\F(S) \in \rPF(S) \subset \PF(S)$. 
Thus, we obtain  
\[
1\le s(S)\le \t(S).
\] 
Then, we say that $S$ has maximal reduced type if $s(S)=\t(S)$, i.e., $\rPF(S)=\PF(S)$. 

Given two sets $A$ and $B$, we define $A+B=\{a+b\mid a\in A,b\in B\}$.  
A relative ideal $I$ of $S$ is a set $I \subset \Z$ such that 
$I+S \subset I$ and $x+I \subset S$ for some $x \in S$, where $x+I:=\{x+a \mid a \in I\}$.  
The following three relative ideals are very important:
the maximal ideal $M(S):=S\setminus\{0\}$ of $S$, 
the conductor ideal $\Cs:=\{z \in \Z\mid z\ge \F(S)+1\}$ of $S$, 
and the standard canonical ideal $K(S):=\{x \in \Z \mid \F(S)-x\notin S\}$ of $S$.
Note that if $I$ and $J$ are relative ideals of $S$, 
then $I+J$ and $I-J:=\{z\in \Z \mid z+y \in I \text{~for all~} y \in J\}$
are relative ideals of $S$. 

The trace ideal of $\Ks$ is defined as $\trK:=\Ks+(S-\Ks)$. 
In general, the following fact is known:
\begin{fact}[{\cite[Proposition 2.1]{zbMATH07395083}}]\label{fact2}
\[
\Cs  \subset \trK \subset S. 
\]
\end{fact}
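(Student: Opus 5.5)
We must prove Fact 2: $\Cs \subset \trK \subset S$, with $\trK=\Ks+(S-\Ks)$. The plan is to prove the two inclusions separately, in each case unwinding the definitions of $\Ks$ and of the colon ideal $S-\Ks$.

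\emph{Upper inclusion $\trK\subset S$.} Take $z=x+y$ with $x\in\Ks$ and $y\in S-\Ks$. By the definition of the colon ideal, $y+k\in S$ for every $k\in\Ks$. In particular $y+x\in S$, so $z\in S$. Nothing more is needed here.

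\emph{Lower inclusion $\Cs\subset\trK$.} Fix an integer $z\ge \F(S)+1$. I will write $z$ as $x+y$ with $x\in\Ks$ and $y\in S-\Ks$. First record two facts about $\Ks$.
\begin{itemize}
\item[(a)] $0\in\Ks$, because $\F(S)-0=\F(S)\notin S$.
\item[(b)] Every element of $\Ks$ is nonnegative. If $x<0$, then $\F(S)-x>\F(S)$, so $\F(S)-x\in S$ and $x\notin\Ks$.
\end{itemize}
Now take $x=0\in\Ks$ and $y=z$. For any $k\in\Ks$ we have $k\ge 0$ by (b), so $z+k\ge z\ge\F(S)+1$ and therefore $z+k\in S$. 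This shows $z\in S-\Ks$, and hence $z=0+z\in\Ks+(S-\Ks)=\trK$.

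\emph{Sanity check.} The argument implicitly uses $S\subset\Ks$, which is what makes $\Ks$ a relative ideal containing $S$. Indeed, if $s\in S$ and $\F(S)-s\in S$, then $\F(S)=s+(\F(S)-s)\in S$, a contradiction; so $\F(S)-s\notin S$ and $s\in\Ks$. No step poses a real obstacle. The only place needing care is (b), the observation that $\Ks\subset\N$, which is what allows the conductor elements to lie in $S-\Ks$.
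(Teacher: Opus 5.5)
Your proof is correct and complete. The paper itself gives no argument here: Fact~\ref{fact2} is quoted from \cite[Proposition 2.1]{zbMATH07395083}, so your proposal supplies a self-contained proof where the paper has none.

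The upper inclusion is the general fact $I+(S-I)\subset S$ for any relative ideal $I$. For the lower inclusion you isolate exactly the two properties that matter. First, $\Ks\subset\N$ gives $\Cs+\Ks\subset\Cs\subset S$, i.e.\ $\Cs\subset S-\Ks$. Then $0\in\Ks$ gives $\Cs=0+\Cs\subset\trK$.

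This is the semigroup version of the standard ring-theoretic argument. For $R=k[[S]]$, let $\omega_R$ be the canonical module corresponding to $\Ks$, so that $R\subseteq\omega_R\subseteq\overline{R}=k[[t]]$. For the conductor $\mathfrak{c}$ one then has $\mathfrak{c}\,\omega_R\subseteq\mathfrak{c}\,\overline{R}=\mathfrak{c}\subseteq R$. Hence $\mathfrak{c}\subseteq R:\omega_R$, and so $\mathfrak{c}\subseteq\omega_R(R:\omega_R)=\tr(\omega_R)$. Your version needs no ring theory, and it covers the degenerate case $S=\N$ (where $\F(S)=-1$) without change.

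One small correction to your closing remark: the argument never uses $S\subset\Ks$ beyond its special case $0\in\Ks$, which you already verified in (a). So the ``sanity check'' is harmless but unnecessary.
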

We recall that $S$ is symmetric if and only if $\Ks=S$, i.e., $0\in \trK$. 
Then, it follows that $\trK=S$ if and only if $S$ is symmetric. 
Hence, it is natural to consider the opposite case. 
\begin{definition}[\cite{zbMATH07648549}]
    A numerical semigroup $S$ is said to be far-flung Gorenstein if $\trK=\Cs$.
\end{definition}
The residue of $S$ is defined as $\ress:=|S\setminus\trK|$, 
and the cardinality of $S \setminus \Cs$ is denoted by $\n(S)$. Clearly, 
\[
\ress \le \n(S). 
\]
Then, $S$ is far-flung Gorenstein if and only if $\ress=\n(S)$ (see \cite[Corollary 2.2]{zbMATH07395083}). 
A numerical semigroup $S$ is called a half-line if $S=\Delta(m)$ for some integer $m\ge1$, where $\Delta(m):=\{x\in \N\mid x\ge m\}\cup \{0\}$. 
It is easy to see that $S$ is a half-line if and only if $\m(S)\ge \F(S)+1$, i.e., $\Ms \subset \Cs$. 
Moreover, if $S$ is a half-line and not symmetric, then $S$ is far-flung Gorenstein. 
Note that $S$ is symmetric and far-flung Gorenstein if and only if $S=\Cs$, i.e., $S=\N$. 
Therefore, it is enough to consider far-flung Gorenstein numerical semigroups with $\t(S)\ge2$.

%%%%%%%%%%%%%%%%%%%%%%%%%%%%%%%%%%%%%%%%%%%%%%%%%%%%%%%%%%%%%%%

\section{Far-flung Gorenstein numerical semigroups with maximal reduced type}\label{FFGwithMRT}

\begin{Rohrbach}[{\cite{zbMATH03023950}, \cite{zbMATH06223909}}]
Let $A$ be a finite set of non-negative integers, 
and let $n(A)$ be the integer such that $0,1,\dots,n(A)-1 \in A+A$ and $n(A) \notin A+A$. 
If $0 \notin A$, then let $n(A)=-1$. For a positive integer $t$, 
the Rohrbach problem asks to find the integer 
\[
\ovn(t):=\max\{n(A)\mid A \subset \N,|A|=t\}.
\]
\end{Rohrbach}

\begin{definition}[{\cite[Definition 7]{zbMATH06272466}}]
    We say that a finite set $A$ of non-negative integers is extremal if $n(A)=\ovn(|A|)$.  
\end{definition}

\begin{example} 
     \begin{itemize}
        \item[(1)] It is easy to see that $\{0\}$ and $\{0,1\}$ are extremal, thus $\ovn(1)=1$ and $\ovn(2)=3$. 
        \item[(2)] Consider $\{0,1,2\}$ and $\{0,1,3\}$. We see that they are extremal and $\ovn(3)=5$. 
        \item[(3)] Let $A \subset \N$ be an extremal finite set with $|A|=4$. Clearly, $0,1 \in A$. 
Since $3 \notin \{0,1\}+\{0,1\}$, it holds either $2 \in A$ or $\in 3\in A$. 
Hence, we observe that $A=\{0,1,3,4\}$ is extremal, and we have $\ovn(4)=9$.  
        \end{itemize}
\end{example}

The solution of the Rohrbach problem is known for $t \le 25$ (see \cite{zbMATH06272466}, \cite{zbMATH06223909}). 
For example, $\ovn(5)=13, \ovn(6)=17, \ovn(7)=21$, and $\ovn(8)=27$. 

\begin{fact}[{\cite[Proposition 6.1]{zbMATH07648549}}]\label{fact3}
    Let $S$ be a numerical semigroup. Then the following statements are equivalent:
    \begin{itemize}
        \item[(1)] $S$ is far-flung Gorenstein;
        \item[(2)] $\{0,\dots,\m(S)-1\}\subset \{(\F(S)-f)+(\F(S)-g) \mid f,g \in \PF(S)\}$.
        \end{itemize}
\end{fact}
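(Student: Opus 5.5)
We prove (1)$\Rightarrow$(2) and (2)$\Rightarrow$(1) by translating the condition $\trK=\Cs$ into a statement about sums of elements of $\Ks$. The two inputs we need are the description of $\trK$ as $\Ks+(S-\Ks)$ and Fact \ref{fact2}. In view of Fact \ref{fact2}, $S$ is far-flung Gorenstein if and only if no element $s\in S$ with $s\le \F(S)$ lies in $\trK$.

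\textbf{First step: identify $S-\Ks$.} Write $\F=\F(S)$. We claim that $S-\Ks=\{-x : x\in\Ks\}+\F-\F$ up to the standard duality, precisely
\[
S-\Ks=\{z\in\Z \mid z+\Ks\subset S\},
\]
and that for $u\in\Z$ the condition $u\in S-\Ks$ is equivalent to $\F-u\notin \Ks-\Ks$-type conditions. In practice the useful form is that $S-\Ks$ is the smallest relative ideal containing the translates $x$ with $\F - x\in \PF(S)$ shifted appropriately. The cleaner route is to show directly that
\[
\trK=\{(\F-f)+(\F-g)+s \mid f,g\in\PF(S),\ s\in S\}\cup\Cs,
\]
or at least that the elements of $\trK$ smaller than $\m(S)$ are exactly the sums $(\F-f)+(\F-g)$ that lie in $[0,\m(S)-1]$. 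Here $\F-f\in\Ks$ for $f\in \PF(S)$, since $\F-(\F-f)=f\notin S$, and $\Ks$ is generated as an $S$-ideal by $\{\F-f\mid f\in\PF(S)\}$. This is the standard fact that the pseudo-Frobenius numbers correspond to the minimal generators of the canonical ideal.

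\textbf{Second step: reduce to small elements.} This is the key step, and it needs care. Because $S$ is a semigroup and $\trK$ is an $S$-ideal, $\trK=\Cs$ is equivalent to $\trK\cap[0,\m(S)-1]$ being as large as possible in the appropriate sense. Concretely, every $s\in S$ with $s\le\F$ can be written as $s=a+k\m(S)$ with $0\le a<\m(S)$. One has to check which $a$ force $s\in\trK$, and the check should go through Apéry-set reasoning. I would then show that (2) is equivalent to $\{0,\dots,\m(S)-1\}\subset \Ks+\Ks$ after the shift by $\F$. Since the trace of the canonical ideal equals $\Ks+(S-\Ks)$ and $S-\Ks=\F+\ldots$, this comparison yields membership statements of the form ``$\F+1+j$ versus $2\F - f - g$''.

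\textbf{Main obstacle.} The delicate point is the exact bookkeeping of $S-\Ks$. I expect it is cleaner to use the known formula, and I would try to prove that $S-\Ks$ equals $\Ks$ translated by the conductor appropriately. Via Fact \ref{fact2} the comparison then reduces to checking residues modulo $\m(S)$. If that computation fails, the fallback is to quote \cite[Proposition 6.1]{zbMATH07648549}, which is exactly this statement.
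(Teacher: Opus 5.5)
The paper does not prove this statement; it only quotes \cite[Proposition 6.1]{zbMATH07648549}, so your fallback matches what the paper does. As an argument, though, your proposal has a genuine gap, and its one concrete formula is false. The set $\{(\F-f)+(\F-g)+s \mid f,g\in\PF(S),\ s\in S\}$ is $\Ks+\Ks$. It always contains $0$ (take $f=g=\F$, $s=0$), so your identity $\trK=(\Ks+\Ks)\cup\Cs$ would give $0\in\trK$, i.e.\ every $S$ symmetric. For instance, $S=\langle 3,4,5\rangle$ has $\Ks=\{0,1\}\cup\{x\in\Z\mid x\ge 3\}$ and $\trK=\Cs=\{x\in\Z\mid x\ge 3\}$, whereas $(\Ks+\Ks)\cup\Cs=\N$.

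The weaker version fails too. We have $\trK\cap[0,\m(S)-1]\subset S\cap[0,\m(S)-1]=\{0\}$, and this set is empty for every non-symmetric $S$, far-flung Gorenstein or not (e.g.\ $\langle 3,4,5\rangle$ and $\langle 5,7,8,11\rangle$). So your Step 2 looks for the property in the wrong set, and with the direction reversed. Far-flung Gorenstein says $\trK$ is as small as possible, while condition (2) says $\Ks+\Ks$ is as large as possible. The reduction to $[0,\m(S)-1]$ is valid only on the $\Ks+\Ks$ side. Likewise, the criterion for $u\in S-\Ks$ involves $\Ks+\Ks$, not $\Ks-\Ks$.

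The missing idea is the duality between $S-\Ks$ and $\Ks+\Ks$, which follows in one line from the definition of $\Ks$. Indeed, $z\notin S-\Ks$ iff $z+k\notin S$ for some $k\in\Ks$, iff $\F-z-k\in\Ks$ for some $k\in\Ks$, iff $\F-z\in\Ks+\Ks$.

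With this the proof is short:
\begin{itemize}
\item[(a)] Since $0\in\Ks\subset\N$, you get $S-\Ks\subset\Ks+(S-\Ks)=\trK$. Also $\Cs+\Ks\subset\Cs\subset S$, so $\Cs\subset S-\Ks$. Hence $\trK=\Cs$ if and only if $S-\Ks=\Cs$ (for the converse, $\trK=\Ks+\Cs=\Cs$).
\item[(b)] By the duality, $S-\Ks=\Cs$ if and only if $\F-z\in\Ks+\Ks$ for every integer $z\le\F$, that is, $\N\subset\Ks+\Ks$.
\item[(c)] Using $\Ks=\{\F-f\mid f\in\PF(S)\}+S$ (the fact you quote), $\Ks+\Ks=\{(\F-f)+(\F-g)\mid f,g\in\PF(S)\}+S$. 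Since $\m(S)\in S$, this contains $\N$ as soon as it contains $[0,\m(S)-1]$.
\item[(d)] For $x<\m(S)$, any representation $x=(\F-f)+(\F-g)+s$ has $s\in S$ with $0\le s\le x<\m(S)$, hence $s=0$.
\end{itemize}
This is exactly condition (2), and no Ap\'ery-set or residue bookkeeping is needed.
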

Fact \ref{fact3} implies that Fact \ref{fact1}. 
From these facts, the classification of far-flung Gorenstein numerical semigroups 
with small type is given (see \cite{zbMATH07648549}, \cite{grigorescu2025farflunggorensteinnumericalsemigroup}).
Recall that $\rPF(S) \subset \PF(S)$. Using $\rPF(S)$, 
we give a characterization of far-flung Gorenstein numerical semigroups. 
\begin{lemma}\label{lemma1}
    Let $S$ be a numerical semigroup. Then the following statements are equivalent:
    \begin{itemize}
        \item[(1)] $S$ is far-flung Gorenstein;
        \item[(2)] $\{0,\dots,\m(S)-1\}\subset \{(\F(S)-f)+(\F(S)-g) \mid f,g \in \rPF(S)\}$;
        \item[(3)] $\m(S) \le n(B)$, where $B=\{\F(S)-f \mid f \in \rPF(S)\}$.
    \end{itemize}
\end{lemma}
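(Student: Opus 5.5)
Fact \ref{fact3} characterizes far-flung Gorenstein semigroups by sums $(\F(S)-f)+(\F(S)-g)$ with $f,g\in\PF(S)$. The plan is to show that one may restrict to $f,g\in\rPF(S)$ without changing which of $0,\dots,\m(S)-1$ are hit. Since $\rPF(S)\subset \PF(S)$, the set in (2) is contained in the set in Fact \ref{fact3}(2). Hence (2) implies condition (2) of Fact \ref{fact3}, and so (2) implies (1).

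For (1)$\Rightarrow$(2), assume $S$ is far-flung Gorenstein. Take $0\le x\le \m(S)-1$. By Fact \ref{fact3}, we can write $x=(\F(S)-f)+(\F(S)-g)$ with $f,g\in\PF(S)$. Both summands are nonnegative, because every pseudo-Frobenius number is at most $\F(S)$. Each summand is also at most $x\le \m(S)-1$. Therefore $\F(S)-f\le \m(S)-1$, that is, $f\ge \F(S)-\m(S)+1$, and likewise for $g$. Since $f,g\notin S$ and both lie in $[\F(S)-\m(S)+1,\F(S)]$, we get $f,g\in\rPF(S)$. This proves (2). The key observation is this nonnegativity-and-bounding argument: any representation of a small number forces both pseudo-Frobenius numbers into the top window of length $\m(S)$.

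For (2)$\Leftrightarrow$(3), note that $B\subset\N$ and that $0=\F(S)-\F(S)\in B$, because $\F(S)\in\rPF(S)$. Hence $n(B)$ is the least nonnegative integer not in $B+B$. With this, (2) says exactly that $0,\dots,\m(S)-1\in B+B$. This holds if and only if the first missing value $n(B)$ is at least $\m(S)$.

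There is no real obstacle. The only point needing care is checking that both summands are nonnegative, which uses $f\le\F(S)$ for $f\in\PF(S)$, and that $0\in B$, so that $n(B)\neq -1$.
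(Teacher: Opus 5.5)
Your proof is correct and follows the paper's argument essentially step for step. (2)$\Rightarrow$(1) uses $\rPF(S)\subset\PF(S)$ together with Fact~\ref{fact3}; (1)$\Rightarrow$(2) bounds each nonnegative summand by $x\le \m(S)-1$, which the paper phrases as a contradiction from $f\le \F(S)-\m(S)$; and (2)$\Leftrightarrow$(3) follows from the definition of $n(B)$. Your explicit checks that $\F(S)-f\ge 0$ and that $0\in B$ (so $n(B)\neq -1$) spell out two points the paper leaves implicit.
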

\begin{proof}
    $(2)\Rightarrow(1)$. Since $\rPF(S)\subset \PF(S)$, it follows from Fact \ref{fact3} that $S$ is far-flung Gorenstein. 

    $(1)\Rightarrow(2)$. Let $x \in \{0,\dots,\m(S)-1\}$. By (1) and Fact \ref{fact3}, 
    we see that $x=(\F(S)-f)+(\F(S)-g)$ for some $f,g \in \PF(S)$. 
    We show that $f,g \in \rPF(S)$. Assume by contradiction that $f \notin \rPF(S)$, namely $f\le \F(S)-\m(S)$. 
    Then $x=(\F(S)-f)+(\F(S)-g)\ge \F(S)-f\ge \m(S)$, which is a contradiction. Thus, $f \in \rPF(S)$. 
    Similarly, we have $g\in \rPF(S)$. Therefore, $f,g\in \rPF(S)$. 

    $(2)\Leftrightarrow(3)$. It follows from the definition of $n(B)$.
\end{proof}

Recall that $1\le \t(S) \le \m(S)-1$ when $S \neq \N$ (see \cite[Corollary 2]{zbMATH03993709}). 
Then, $S$ is a MED-semigroup if and only if $\t(S)=\m(S)-1$ (see \cite[Corollary 3.2]{zbMATH05623301}).

\begin{corollary}\label{corollary1}
    Let $S \neq \N$ be a far-flung Gorenstein numerical semigroup. 
    Then 
    \[
    \t(S)+1\le \m(S) \le \ovn(s(S)).
    \]
\end{corollary}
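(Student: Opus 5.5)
The left inequality $\t(S)+1\le\m(S)$ is the classical bound $\t(S)\le\m(S)-1$, valid for every $S\neq\N$ (\cite[Corollary 2]{zbMATH03993709}), so only the right inequality needs an argument. For it I will apply Lemma \ref{lemma1} and then compare with the definition of $\ovn$.

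Since $S$ is far-flung Gorenstein, Lemma \ref{lemma1}, (1)$\Rightarrow$(3), gives $\m(S)\le n(B)$ with $B=\{\F(S)-f\mid f\in\rPF(S)\}$. The map $f\mapsto \F(S)-f$ is injective, so $|B|=|\rPF(S)|=s(S)$. Moreover $B\subset\N$, since every $f\in\rPF(S)$ satisfies $f\le\F(S)$. Hence $B$ is a finite subset of $\N$ of cardinality $s(S)$, and by the definition of the Rohrbach number $n(B)\le\ovn(s(S))$. Chaining the two inequalities gives $\m(S)\le\ovn(s(S))$.

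The only point needing care is that $n(B)$ is the quantity from the Rohrbach problem, including the convention $n(B)=-1$ when $0\notin B$. This case cannot occur because $\F(S)\in\rPF(S)$, so $0\in B$. In any event the definition of $\ovn$ as a maximum over all $A\subset\N$ with $|A|=t$ covers $B$ directly. I expect no real obstacle here: the corollary follows at once from Lemma \ref{lemma1} once one observes that $|B|=s(S)$.
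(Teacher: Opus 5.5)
Your proof is correct and is essentially the argument the paper leaves implicit. The left inequality is the cited bound $\t(S)\le\m(S)-1$, and the right inequality follows from Lemma~\ref{lemma1}(3) together with $|B|=s(S)$ and the definition of $\ovn$.
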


\begin{lemma}\label{lemma2}
    Let $t \ge 1$ be an integer. Then $\ovn(t)<\ovn(t+1)$. 
\end{lemma}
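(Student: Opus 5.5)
Take an extremal set $A\subset\N$ with $|A|=t$, so $n(A)=\ovn(t)$, and build from it a set $A'$ with $|A'|=t+1$ and $n(A')>\ovn(t)$. Then $\ovn(t+1)\ge n(A')>\ovn(t)$, which is the claim.

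First I record what we know about $A$. Since $n(A)=\ovn(t)\ge 1$ (for instance $\{0\}\subset A$ already gives $n\ge 1$ when $0\in A$, and any maximizer must contain $0$), we have $0\in A$. By definition,
\[
[0,n(A)-1]\subset A+A \quad\text{and}\quad n(A)\notin A+A .
\]
Put $N:=n(A)$ and $A':=A\cup\{N\}$. Because $N\notin A+A$ and $0\in A$, we get $N=N+0\notin A$. Hence $|A'|=t+1$.

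Next I compare the sumsets. We have $A'+A'\supset A+A\supset[0,N-1]$, and also $N=N+0\in A'+A'$. Therefore $[0,N]\subset A'+A'$. This forces $n(A')\ge N+1$: the number $n(A')$ is the first integer missing from $A'+A'$, and $A'+A'$ is finite, so such a first gap exists.

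Combining these, $\ovn(t+1)\ge n(A')\ge \ovn(t)+1$. The only point needing care is confirming $0\in A$ for the extremal set, and this holds because $n(A)=-1$ otherwise, while $\ovn(t)\ge 1$.
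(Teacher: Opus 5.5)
Your proof is correct and is essentially the paper's argument: you adjoin $n(A)=\ovn(t)$ to an extremal set $A$ and observe that $[0,\ovn(t)]$ then lies in the sumset, which gives $\ovn(t+1)\ge\ovn(t)+1$. You also check explicitly that $\ovn(t)\notin A$, so that the new set has size $t+1$; the paper leaves this step implicit.
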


\begin{proof}
    There exists an extremal finite set $A \subset \N$ with $|A|=t$. 
    By definition, we have $0,\dots,\ovn(t)-1 \in A+A$ and $\ovn(t) \notin A+A$. 
    Set $B=A \cup \{\ovn(t)\}$. Note that $0 \in A$ since $A$ is extremal. 
    Then $0,\dots,\ovn(t) \in B+B$. 
    Hence, we obtain $\ovn(t)+1\le n(B)\le \ovn(t+1)$ since $|B|=t+1$. 
    Therefore, $\ovn(t)<\ovn(t+1)$.
\end{proof}

Proposition \ref{proposition1} suggests that we need to construct a far-flung Gorenstein numerical semigroup with maximal reduced type 
to answer Question \ref{question1}. 
\begin{proposition}\label{proposition1}
    Let $S$ be a far-flung Gorenstein numerical semigroup with $\t(S)\ge2$.
    If $\m(S) \ge \ovn(\t(S)-1)+1$, then $S$ has maximal reduced type. 
    In particular, if $\m(S)=\ovn(\t(S))$, then $S$ has maximal reduced type. 
\end{proposition}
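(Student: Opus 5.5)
We argue by contraposition, combining Corollary \ref{corollary1} with the monotonicity of $\ovn$ from Lemma \ref{lemma2}. Suppose $S$ is far-flung Gorenstein with $\t(S)\ge 2$, and suppose $S$ does not have maximal reduced type. Since $s(S)\le \t(S)$ always holds, this means $s(S)\le \t(S)-1$. Note that $s(S)\ge 1$, because $\F(S)\in\rPF(S)$, so $\ovn(s(S))$ is defined.

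Since $\t(S)\ge 2$, we have $S\neq\N$, so Corollary \ref{corollary1} applies and gives $\m(S)\le \ovn(s(S))$. By Lemma \ref{lemma2}, the function $\ovn$ is strictly increasing on positive integers, hence weakly increasing. Together with $s(S)\le \t(S)-1$ this yields
\[
\m(S)\le \ovn(s(S))\le \ovn(\t(S)-1),
\]
which contradicts the hypothesis $\m(S)\ge \ovn(\t(S)-1)+1$. Therefore $s(S)=\t(S)$, that is, $S$ has maximal reduced type.

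For the ``in particular'' statement, apply Lemma \ref{lemma2} with $t=\t(S)-1\ge 1$. This gives $\ovn(\t(S)-1)<\ovn(\t(S))$, so $\ovn(\t(S))\ge \ovn(\t(S)-1)+1$ because these are integers. Hence the assumption $\m(S)=\ovn(\t(S))$ implies $\m(S)\ge \ovn(\t(S)-1)+1$, and the first part applies.

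There is no real obstacle in this argument. The only points to check are that Corollary \ref{corollary1} is available, which needs $S\neq\N$ and follows from $\t(S)\ge2$, and that $s(S)\ge1$, so that monotonicity of $\ovn$ may be applied with both arguments at least $1$.
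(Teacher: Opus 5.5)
Your proof is correct and follows essentially the paper's argument: Corollary \ref{corollary1} gives $\m(S)\le \ovn(s(S))$, and the strict monotonicity of $\ovn$ from Lemma \ref{lemma2} then forces $s(S)>\t(S)-1$. Your contrapositive phrasing and your explicit check of the ``in particular'' clause (via $\ovn(\t(S)-1)<\ovn(\t(S))$) only spell out steps the paper leaves implicit.
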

\begin{proof}
    By Corollary \ref{corollary1}, we have $\m(S)\le \ovn(s(S))$. 
    By assumption, $\ovn(\t(S)-1)+1\le\ovn(s(S))$. 
    According to Lemma \ref{lemma2}, 
    we obtain $\t(S)-1<s(S)$. 
    Since $s(S)\le \t(S)$, it follows that $s(S)=\t(S)$.
\end{proof}
The following example shows that the converse of Proposition \ref{proposition1} does not hold in general. 
\begin{example}
    Let $S=\langle 9, 10, 13, 15, 21 \rangle$. 
    Then, $\m(S)=9$ and $\rPF(S)=\PF(S)=\{ 11, 12, 14, 16, 17\}$.  
    Thus, we obtain $s(S)=\t(S)=5$ and $n(B)=13$, where $B=\{\F(S)-f\mid f \in \rPF(S)\}=\{ 0, 1, 3, 5, 6\}$. 
    It follows from Lemma \ref{lemma1} that $S$ is far-flung Gorenstein. 
    Since $\ovn(4)=9$, we have $\m(S)=9<10=\ovn(\t(S)-1)+1$. 
\end{example}

Proposition \ref{proposition1} is inspired by and extends \cite[Theorem 3.26]{arXiv:2306.17069}.
This generalization provides elementary proofs of \cite[Theorem 3.23 and 3.25]{arXiv:2306.17069}.

\begin{corollary}[{\cite[Theorem 3.23 and 3.25]{arXiv:2306.17069}}]\label{corollary2}
    Let $S \neq \N$ be a far-flung Gorenstein numerical semigroup. 
    Then the following hold:
    \begin{itemize}
        \item[(1)] If $\t(S)\le3$, then $S$ has maximal reduced type;
        \item[(2)] If $\t(S)=4$ and $S$ is not a MED-semigroup, then $S$ has maximal reduced type.
    \end{itemize}
\end{corollary}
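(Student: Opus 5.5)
We derive both parts of Corollary \ref{corollary2} from Proposition \ref{proposition1}, together with the inequality $\t(S)+1\le \m(S)$ from Corollary \ref{corollary1} and the known values $\ovn(1)=1$, $\ovn(2)=3$, $\ovn(3)=5$. In each case we check the hypothesis $\m(S)\ge \ovn(\t(S)-1)+1$ and then apply Proposition \ref{proposition1}.

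For (1), first suppose $\t(S)=1$. Then $s(S)=\t(S)=1$ automatically, since $1\le s(S)\le \t(S)$.
\begin{itemize}
\item If $\t(S)=2$, Corollary \ref{corollary1} gives $\m(S)\ge 3$. The hypothesis of Proposition \ref{proposition1} asks for $\m(S)\ge \ovn(1)+1=2$, which holds, so $S$ has maximal reduced type.
\item If $\t(S)=3$, Corollary \ref{corollary1} gives $\m(S)\ge 4$. The required bound is $\ovn(2)+1=4$, which holds, so again Proposition \ref{proposition1} applies.
\end{itemize}

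For (2), let $\t(S)=4$. The required bound is $\m(S)\ge \ovn(3)+1=6$. Corollary \ref{corollary1} gives $\m(S)\ge 5$. The case $\m(S)=5$ means $\t(S)=\m(S)-1$, which is equivalent to $S$ being a MED-semigroup by the cited result \cite[Corollary 3.2]{zbMATH05623301}. That case is excluded by hypothesis, so $\m(S)\ge 6$ and Proposition \ref{proposition1} yields $s(S)=\t(S)$.

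There is no real obstacle here. The only point needing care is matching the threshold $\ovn(\t(S)-1)+1$ against the lower bound $\t(S)+1$ in each small case, and, for type $4$, using the MED characterization to exclude $\m(S)=5$.
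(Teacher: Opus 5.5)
Your proof is correct and follows the paper's argument: Proposition \ref{proposition1} combined with the bound $\t(S)+1\le\m(S)$, the small values $\ovn(1)=1$, $\ovn(2)=3$, $\ovn(3)=5$, and, for $\t(S)=4$, the MED characterization to rule out $\m(S)=5$. The one difference is the case $\t(S)=2$. There you apply Proposition \ref{proposition1} directly, since its threshold $\ovn(1)+1=2$ is automatic, whereas the paper cites \cite[Proposition 3.22]{arXiv:2306.17069} to get $s(S)\ge 2$; your version is therefore slightly more self-contained.
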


\begin{proof}
    (1) By \cite[Proposition 3.22]{arXiv:2306.17069}, we obtain $s(S)\ge 2$. 
    Thus, $2\le s(S)\le \t(S)$. This implies that if $\t(S)=2$, then $s(S)=\t(S)$. 
    Then we may assume that $\t(S)=3$. 
    In this case, we see that $\m(S)\ge \t(S)+1=4=\ovn(\t(S)-1)+1$ since $\ovn(2)=3$. 
    Therefore, we obtain $s(S)=\t(S)$ by Proposition \ref{proposition1}. 
    
    (2) Since $S$ is not a MED-semigroup, we have $\t(S)+1<\m(S)$. 
    Hence, $\m(S)\ge\t(S)+2=6=\ovn(\t(S)-1)+1$ since $\ovn(3)=5$. 
    Similarly, we obtain $s(S)=\t(S)$.
\end{proof}

The following example shows that there exists a far-flung Gorenstein numerical semigroup $S$ with $\t(S)=5$
such that $S$ is not a MED-semigroup and $S$ does not have maximal reduced type. 
\begin{example}
    Let $S=\langle 9,15,28,29,40,41 \rangle$. 
    In this case, we have $\m(S)=9, \PF(S)=\{21, 31, 32, 34, 35\}$, and $\rPF(S)=\{31, 32, 34, 35 \}$. 
    Thus, $4=s(S)<\t(S)=5$ and $\t(S)+1<\m(S)$. Moreover, we see that     
    $B:=\{\F(S)-f\mid f\in \rPF(S)\}=\{ 0, 1, 3, 4 \}$ and $n(B)=9$. 
    According to Lemma \ref{lemma1}, $S$ is far-flung Gorenstein.     
\end{example}

%%%%%%%%%%%%%%%%%%%%%%%%%%%%%%%%%%%%%%%%%%%%%%%%%%%%%%%%%%%%%%%

\section{Main results}\label{MainResults}

Let $m \ge 2$ be an integer, and let $A$ be a finite set of non-negative integers such that $0\in A \subset [0,m-2]$.
It is easy to see that 
\[
S(m,A):=\Delta(m) \setminus \{(2m-1)-a \    \mid a\in A\}
\]
is a numerical semigroup, where $\Delta(m)=\{x \in \N \mid x\ge m\} \cup \{0\}$. 
Let $\l(S)$ denote the cardinality of $\Ks \setminus S$. 
\begin{proposition}\label{construction}
    Let $m \ge 2$ be an integer, and let $A$ be a finite set of non-negative integers such that $0\in A \subset [0,m-2]$. 
    Then the following hold:
    \begin{itemize}
        \item[(1)] $\m(S(m,A))=m$, $\F(S(m,A))=2m-1$;
        \item[(2)] $\rPF(S(m,A))=\{(2m-1)-a \mid a\in A\}$, $s(S(m,A))=|A|$. 
        In particular, $\{\F(S(m,A))-f\mid f \in \rPF(S(m,A))\}=A$;
        \item[(3)] $S(m,A)$ is far-flung Gorenstein if and only if $m\le 
        n(A)$;
        \item [(4)] $\n(S(m,A))= m-|A|+1$, $\l(S(m,A))=2|A|-2$; 
        \item [(5)] $\n(S(m,A))>\l(S(m,A))$ if and only if $m>3(|A|-1)$. 
    \end{itemize}
\end{proposition}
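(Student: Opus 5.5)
Write $S=S(m,A)$ and $G=\{(2m-1)-a\mid a\in A\}$, so that $S=\Delta(m)\setminus G$. Since $A\subset[0,m-2]$, we have $G\subset[m+1,2m-1]$. This also gives that $S$ is a semigroup, because any sum of two nonzero elements is at least $2m$.

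\emph{Part (1).} Since $0\in A$, the Frobenius number $2m-1$ is removed, so $\F(S)=2m-1$. Every element of $G$ is at least $m+1$, so $m\in S$ and $\m(S)=m$.

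\emph{Part (2).} We have $\rPF(S)=[m,2m-1]\setminus S=G$, because every element of $[m,2m-1]$ outside $G$ lies in $S$. Thus $s(S)=|A|$, and $\F(S)-G=A$.

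\emph{Part (3).} Apply Lemma~\ref{lemma1}(3) with $B=A$.

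\emph{Part (4).} The set $S\setminus\Cs$ consists of $0$ together with $[m,2m-1]\setminus G$. So $\n(S)=1+m-|A|$.

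For $\l(S)=|\Ks\setminus S|$, use the definition $\Ks=\{x\mid 2m-1-x\notin S\}$. The gaps of $S$ are the elements of $[1,m-1]$ and of $G$, so:
\begin{itemize}
\item $2m-1-x\in[1,m-1]$ gives $x\in[m,2m-2]$;
\item $2m-1-x\in G$ gives $x\in A$;
\item $2m-1-x<0$ gives $x\geq 2m$.
\end{itemize}
Hence
\[
\Ks=A\cup[m,2m-2]\cup[2m,\infty).
\]
Now count $\Ks\setminus S$:
\begin{itemize}
\item the elements of $A\setminus\{0\}$, which are nonzero and at most $m-2$, so they are gaps; this contributes $|A|-1$;
\item the elements of $G\cap[m,2m-2]=G\setminus\{2m-1\}$; this contributes $|A|-1$.
\end{itemize}
Therefore $\l(S)=2|A|-2$. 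The main care is to check that $2m-1\notin\Ks$ and that $A\cap[m,\infty)=\emptyset$, so that no element is double counted.

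\emph{Part (5).} From (4), $m-|A|+1>2|A|-2$ is equivalent to $m>3|A|-3$, which is the claimed condition.

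I expect no serious obstacle. The only delicate point is the bookkeeping in the computation of $\Ks$ in part (4).
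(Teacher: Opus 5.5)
Your proof is correct, and for parts (1), (2), (3) and (5) it is the same argument as the paper's. The only divergence is the computation of $\l(S(m,A))$ in part (4).

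The paper never describes $K(S)$ there. It uses the general identity $\g(S)=\n(S)+\l(S)$ together with $\g(S(m,A))=(m-1)+|A|$, so $\l=\g-\n=2|A|-2$ follows in one line.

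You instead compute $K(S(m,A))=A\cup[m,2m-2]\cup[2m,\infty)$ explicitly and count $K(S)\setminus S$ directly. The piece $[2m,\infty)$ lies in $S$ and contributes nothing; you leave this implicit, which is harmless.

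The trade-off is as follows. Your route is longer and needs the disjointness bookkeeping you flag. In exchange, it is self-contained: it effectively reproves $\l=\g-\n$ in this special case. It also identifies $K(S)\setminus S$ exactly as $(A\setminus\{0\})\cup(G\setminus\{2m-1\})$. The paper's route is shorter but relies on the standard fact that $x\mapsto \F(S)-x$ is a bijection between $K(S)\cap[0,\F(S)]$ and the set of gaps.
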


\begin{proof}
    (1), (2) Since $0\in A \subset [0,m-2]$, it follows that 
    \[
2m-1 \in \{(2m-1)-a \mid a \in A\} \subset [m+1,2m-1]. 
    \]
Thus, $\m(S(m,A))=m$ and $\F(S(m,A))=2m-1$. 
Furthermore, we obtain $\rPF(S(m,A))=[m,2m-1]\setminus S=\{(2m-1)-a \mid a\in A\}$.

(3) The assertion follows from (1), (2), and Lemma \ref{lemma1}. 

(4) By the definition of $S(m,A)$, we see that $\n(S(m,A))=m-|A|+1$. 
Recall that for a numerical semigroup $S$, $\g(S)=\n(S)+\l(S)$. 
Since $\g(S(m,A))=m-1+|A|$, we have $\l(S(m,A))=2|A|-2$. 

(5) The assertion follows from (4).
\end{proof}

\begin{example}
    \begin{itemize}
        \item [(1)] Let $S=S(5,\{0,3\})$. Then, $S=\Delta(5)\setminus \{6,9\}=\langle 5, 7, 8, 11 \rangle$. 
        From Proposition \ref{construction}, we obtain $\m(S)=5, \F(S)=9$, and $\rPF(S)=\{6,9\}$. 
        Note that $n(\{0,3\})=1$. Thus, $S$ is not far-flung Gorenstein by Proposition \ref{construction}. 
        \item [(2)] Let $S=S(5,\{0,1,2\})$. Then, $S=\Delta(5)\setminus\{7,8,9\}=\langle 5,6,13,14 \rangle$. 
        From Proposition \ref{construction}, we obtain $\m(S)=5, \F(S)=9$, and $\rPF(S)=\{7,8,9\}$. 
        Note that $n(\{0,1,2\})=5$. Thus, $S$ is far-flung Gorenstein by Proposition \ref{construction}. 
    \end{itemize}
\end{example}

\begin{theorem}\label{theorem1}
    Let $S$ be a numerical semigroup such that $\m(S)<\F(S)+1$, 
    and let $B=\{\F(S)-f \mid f \in \rPF(S)\}$.
    Assume that $\max B+1\le n(B)$ and $\max B+n(B)<2\m(S)$.
    Then the following hold: 
    \begin{itemize}
    \item[(1)] If $x \in [1,\m(S)-1] \setminus B$, then $x \notin \PF(S)$;
    \item[(2)] If $x \in B \smz$, then $x \notin \PF(S)$.
    \end{itemize}
    In particular, if $x \in [1,\m(S)-1]$, then $x \notin \PF(S)$. 
\end{theorem}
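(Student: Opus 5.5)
The plan is to argue by contradiction throughout, using two elementary facts. First, every $x\in[1,\m(S)-1]$ is a gap, since $0<x<\m(S)$. Second, if $f\in\PF(S)$ with $f\neq \F(S)$, then $\F(S)-f\notin S$: otherwise $\F(S)-f\in\Ms$ and $\F(S)=f+(\F(S)-f)\in S$. Write $\F=\F(S)$, $\m=\m(S)$, and recall from the definitions that $\rPF(S)=[\F-\m+1,\F]\setminus S$ is exactly the set of \emph{all} gaps in the window $[\F-\m+1,\F]$. Hence $B=\{\F-f\mid f\in\rPF(S)\}$ consists of those $y\in[0,\m-1]$ with $\F-y\notin S$.

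For (1), suppose $x\in[1,\m-1]$ and $x\in\PF(S)$. Since $\m<\F+1$, we have $x\neq\F$, so by the second fact $\F-x\notin S$. But $\F-x\in[\F-\m+1,\F-1]$, so $\F-x$ is a gap in the window, that is, $\F-x\in\rPF(S)$ and $x\in B$. This proves (1); it does not even need the hypotheses on $n(B)$.

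For (2), let $x\in B\smz$ and suppose $x\in\PF(S)$. It suffices to find $b\in B$ with $\F-x-b\in\Ms$, because then $x+(\F-x-b)=\F-b\in\rPF(S)$ is a gap, contradicting $x\in\PF(S)$. By the description of $B$ above, whenever $y=x+b\le \m-1$ and $y\notin B$, we get $\F-y\in S$, and $\F-y>0$ since $\F\ge\m$. So the goal is a $b\in B$ with $x+b\in[0,\m-1]\setminus B$.

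The hypotheses enter here. From $\max B+1\le n(B)$ and $\max B+n(B)<2\m$ we get $2\max B+1<2\m$, so $\max B\le \m-1$. Let $M=\max B$. If $x+M\le\m-1$, then $b=M$ works, since $x+M>M$ forces $x+M\notin B$. The remaining case $x+M\ge\m$ is the main obstacle. There I would use the interval $[M+1,n(B)-1]\subset B+B$ together with $n(B)<2\m-M$. The first attempt is to pass to $x'=\m-x$-type translates, or to use the property that each gap $g=\F-x-M$ satisfies $f-g\in S$ for some $f\in\PF(S)$. I would split according to whether $f\in\rPF(S)$ (so $f=\F-c$ with $c\in B$, giving $x+M-c\in S$, and then bounding $x+M-c$ between $1$ and $\m-1$ via $c\le M$ and $x+M<n(B)+M<2\m$) or $f\le \F-\m$. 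In the latter case, I would combine the relation with $x+\m\in S$ to produce an element of $S$ in $[1,\m-1]$, which is the desired contradiction. The last sentence of the theorem then follows by combining (1) and (2).
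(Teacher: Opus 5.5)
Your proof of (1) is correct and matches the paper's argument. Your reduction for (2) is also sound: it suffices to find $b\in B$ with $x+b\in[1,\m(S)-1]\setminus B$, because then $\F(S)-x-b\in\Ms$ while $x+(\F(S)-x-b)=\F(S)-b$ is a gap. The subcase $x+\max B\le\m(S)-1$ with $b=\max B$ is fine.

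The gap is the remaining case $x+\max B\ge\m(S)$. This is where all the content lies, and it genuinely occurs: take $S=S(7,\{0,1,2,4\})$, so $B=\{0,1,2,4\}$, $n(B)=\m(S)=7$, $\F(S)=13$, and $x=4$. In this case you only list candidate strategies, and the one concrete step fails. From $c\le\max B$ and $x+\max B<2\m(S)$ you only get $1\le x+\max B-c<2\m(S)$, not $x+\max B-c\le\m(S)-1$. So $x+\max B-c\in S$ yields no contradiction; in the example, $g=5$, $f=12$, $c=1$ give $x+\max B-c=7=\m(S)\in S$. The branch $f\le\F(S)-\m(S)$ is not carried out at all. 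Notice also that you never use $n(B)\notin B+B$, which is the key.

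The missing idea is as follows. Since $1\le x\le\max B\le n(B)-1$, the integer $n(B)-x$ lies in $[1,n(B)-1]\subset B+B$, so $n(B)-x=a+b$ with $a,b\in B$. If both $n(B)-a\ge\m(S)$ and $n(B)-b\ge\m(S)$, adding gives $2\m(S)\le 2n(B)-(a+b)=n(B)+x\le n(B)+\max B<2\m(S)$, which is impossible. So, say, $n(B)-b\le\m(S)-1$. Then $x+a=n(B)-b$ lies in $[1,\m(S)-1]$. It is not in $B$, since otherwise $n(B)=(n(B)-b)+b\in B+B$.

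Thus $a$ is exactly the element your reduction requires. This settles every $x\in B\smz$ at once, so your case split on $x+\max B$ is unnecessary. This is the paper's proof of (2).
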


\begin{proof}
    (1) Let $x \in [1,\m(S)-1]\setminus B$. 
    We show that $\F(S)-x \in S$. 
    By assumption, we see that $\F(S)-x \in [\F(S)-\m(S)+1,\F(S)-1]$. 
    If $\F(S)-x \notin S$, then $\F(S)-x \in \rPF(S)$. 
    This implies that $x \in B$, which is a contradiction. 
    Thus, $\F(S)-x \in S$.
    Since $x\le \m(S)-1<\F(S)$, we have $\F(S)-x \in S \smz$.  
    It follows that $x \notin \PF(S)$. 
    
    (2) Let $x \in B\setminus\{0\}$. Note that $n(B) \notin B+B$. 
    This implies that $n(B)-x \notin B$. Since $\max B +1\le n(B)$ and $x>0$, we have $1\le n(B)-x<n(B)$. 
    Thus, $n(B)-x  \in(B+B)\setminus B$. 
    It follows that 
    \[
    n(B)-x=a+b
    \]
    for some $a,b\in B$ with $0<a,b<n(B)-x$.
    We now show that $n(B)-a<\m(S)$ or $n(B)-b<\m(S)$. 
    Assume by contradiction that $n(B)-a\ge\m(S)$ and $n(B)-b\ge\m(S)$. 
    Then $2\m(S)\le (n(B)-a)+(n(B)-b)=2n(B)-(a+b)=n(B)+x$. 
    This is a contradiction since $\max B+n(B)<2\m(S)$. 
    Hence, we may assume that $n(B)-b <\m(S)$. 
    Since $b \in B$, we have $n(B)-b \notin B$ and $n(B)-b \in [1,\m(S)-1]$. 
    Using the same argument in (1), we obtain $\F(S)-(n(B)-b) \in S\smz$.
    Then, 
    \[
(\F(S)-a)-x =\F(S)-(a+x)=\F(S)-(n(B)-b) \in S \smz. 
    \]
    Note that $\F(S)-a \in \rPF(S)$ since $a \in B$. 
    Therefore, $x \notin \PF(S)$. 
\end{proof}

In the setting of Theorem \ref{theorem1}, if $S$ is far-flung Gorenstein, 
then $\max B+1\le \m(S) \le n(B)$ by Lemma \ref{lemma1}. 
Hence, we have the following corollary:
\begin{corollary}
    Let $S$ be a far-flung Gorenstein numerical semigroup such that $\m(S)<\F(S)+1$, 
    and let $B=\{\F(S)-f \mid f \in \rPF(S)\}$.
    Assume that $\max B+n(B)<2\m(S)$. 
    Then $x \notin \PF(S)$ for all $x \in [1,\m(S)-1]$. 
\end{corollary}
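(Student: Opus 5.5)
The plan is to reduce the corollary directly to Theorem \ref{theorem1}, so all I need is to check that its two hypotheses $\max B+1\le n(B)$ and $\max B+n(B)<2\m(S)$ hold. The second one is assumed in the corollary. Also, $\m(S)<\F(S)+1$ is assumed, so the standing hypothesis of Theorem \ref{theorem1} is in place. The only real work is the first inequality, and I will get it from the far-flung Gorenstein property.

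First I bound $\max B$ using the definition of $\rPF(S)$. Every $f\in\rPF(S)$ lies in $[\F(S)-\m(S)+1,\F(S)]$, so $\F(S)-f\in[0,\m(S)-1]$. Hence $B\subset[0,\m(S)-1]$ and $\max B\le \m(S)-1$, that is, $\max B+1\le\m(S)$. Next, since $S$ is far-flung Gorenstein, the implication $(1)\Rightarrow(3)$ of Lemma \ref{lemma1} gives $\m(S)\le n(B)$. Chaining the two inequalities yields $\max B+1\le\m(S)\le n(B)$, which is the first hypothesis of Theorem \ref{theorem1}.

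Now all hypotheses of Theorem \ref{theorem1} are satisfied, and its ``in particular'' clause says that $x\notin\PF(S)$ for every $x\in[1,\m(S)-1]$. That is exactly the claim.

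There is no genuine obstacle. The only point needing care is the bound $\max B\le\m(S)-1$, which is immediate from the definition of $\rPF(S)$, together with invoking the correct direction of Lemma \ref{lemma1}.
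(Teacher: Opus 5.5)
Your proposal is correct and is exactly the paper's argument: note $\max B+1\le\m(S)$ because $B\subset[0,\m(S)-1]$, get $\m(S)\le n(B)$ from Lemma \ref{lemma1}, and then apply Theorem \ref{theorem1}. Nothing is missing.
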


\begin{corollary}\label{corollary3}
    Let $m \ge 2$ be an integer, and let $A$ be a finite set of non-negative integers such that $0\in A \subset [0,m-2]$. 
    Set $S=S(m,A)$. 
    Then the following hold:
    \begin{itemize}
        \item [(1)] If $\max A+1\le n(A)$ and $\max A+n(A)<2m$, then $s(S)=\t(S)=|A|$. 
        \item [(2)] If $m \le n(A)$ and $\max A+n(A)<2m$, then $S$ is a far-flung Gorenstein numerical semigroup with $s(S)=\t(S)=|A|$;
        \item [(3)] If $m=n(A)$, then $S$ is a far-flung Gorenstein numerical semigroup with $s(S)=\t(S)=|A|$.
    \end{itemize}
\end{corollary}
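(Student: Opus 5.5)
The plan is to deduce all three parts from Theorem \ref{theorem1} together with Proposition \ref{construction}. By Proposition \ref{construction}(1) and (2), $S=S(m,A)$ has $\m(S)=m$ and $\F(S)=2m-1$, so $\m(S)<\F(S)+1$. Moreover $B=\{\F(S)-f\mid f\in\rPF(S)\}=A$, and $s(S)=|A|$. Hence the hypotheses of Theorem \ref{theorem1} become exactly $\max A+1\le n(A)$ and $\max A+n(A)<2m$.

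For (1), Theorem \ref{theorem1} gives $x\notin\PF(S)$ for every $x\in[1,m-1]$. I will then show that no pseudo-Frobenius number lies outside $[m,2m-1]$, using only the shape of $S$.
\begin{itemize}
\item Any $f\in\PF(S)$ satisfies $f\le\F(S)=2m-1$.
\item Since $S\subset\N$, we have $f\ge 0$ whenever $f+m\in S$. Also $f\ne 0$ because $0\in S$.
\item If $f\in[1,m-1]$, Theorem \ref{theorem1} excludes it.
\end{itemize}
So $\PF(S)\subset[m,2m-1]\setminus S=[\F(S)-\m(S)+1,\F(S)]\setminus S=\rPF(S)$. Combined with $\rPF(S)\subset\PF(S)$, this gives $\t(S)=s(S)=|A|$.

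For (2), the condition $m\le n(A)$ makes $S$ far-flung Gorenstein by Proposition \ref{construction}(3). It remains to check $\max A+1\le n(A)$. This holds because $\max A\le m-2$, so $\max A+1\le m-1<m\le n(A)$. Part (1) then applies.

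For (3), assume $m=n(A)$. Far-flung Gorensteinness again follows from Proposition \ref{construction}(3). For the inequality I need $\max A+n(A)<2m$, which reads $\max A+m<2m$, i.e. $\max A<m$. This is true since $\max A\le m-2$. Thus (3) reduces to (2).

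I expect no real obstacle. The only point needing care is the small step in (1) that all pseudo-Frobenius numbers are at least $1$, which is what makes $[1,m-1]$ cover everything below $m$.
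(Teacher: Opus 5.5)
Your proposal follows the paper's proof: Proposition \ref{construction} gives $B=A$, $\m(S)=m$ and $\F(S)=2m-1$, Theorem \ref{theorem1} then rules out $[1,m-1]$ from $\PF(S)$, and (2) and (3) come from checking the inequalities using $\max A\le m-2$. One correction concerns the step you flagged yourself. From $f+m\in S\subset\N$ you only get $f\ge -m$, not $f\ge 0$. The right argument is: if $f<0$, then $s=\F(S)-f>\F(S)$ lies in $S\smz$, while $f+s=\F(S)\notin S$. Hence every pseudo-Frobenius number is positive; the paper simply takes this standard fact for granted.
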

\begin{proof} 
    (1) By Proposition \ref{construction}, we have $\m(S)=m$, $\F(S)=2m-1$, and
    \[
\{\F(S)-f\mid f \in \rPF(S)\}=A.
    \]
    Thus, $\rPF(S)=[\F(S)-\m(S)+1, \F(S)] \setminus S=[m,2m-1] \setminus S$. 
    We show that $\rPF(S)=\PF(S)$. Clearly, $\rPF(S)\subset \PF(S)$.
    Assume by contradiction that there exists $f \in \PF(S)$ such that $f \notin \rPF(S)$. 
    We see that $f \in [1, m-1]$. Note that $\F(S)>\m(S)$ since $m\ge 2$.
    Then it follows from Theorem \ref{theorem1} that $f \notin \PF(S)$. 
    This is a contradiction. Therefore, we have $\rPF(S)=\PF(S)$, i.e., $s(S)=\t(S)=|A|$.

    (2), (3) The assertions follow from (1) and Proposition \ref{construction}. 
\end{proof}

The following lemma is known, 
but we include a proof for the convenience of readers.
\begin{lemma}\label{lem_extremal}
    Let $t\ge 2$ be an integer, and let $A \subset \N$ be an extremal finite set with $|A|=t$. 
    Then the following hold:
    \begin{itemize}
        \item[(1)] $0,1\in A$;
        \item[(2)] $\max A+2\le n(A)$.
    \end{itemize}
\end{lemma}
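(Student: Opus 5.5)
For (1), I will use extremality together with Lemma \ref{lemma2}. Since $t\ge 2$, Lemma \ref{lemma2} gives $\ovn(t)>\ovn(1)=1$, so $n(A)=\ovn(t)\ge 3>1$. By the definition of $n(A)$, this forces $0\in A+A$ and $1\in A+A$. Because $A\subset\N$, the condition $0\in A+A$ means $0=a+b$ with $a,b\ge0$, so $0\in A$. Likewise $1=a+b$ forces $\{a,b\}=\{0,1\}$, so $1\in A$.

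For (2), I will argue by contradiction, using the same augmentation trick as in the proof of Lemma \ref{lemma2}. First note that $A$ is contained in $[0,n(A)]$; in fact $A\subset[0,n(A)-1]$, since $n(A)\in A$ would give $n(A)=n(A)+0\in A+A$, using $0\in A$ from (1).

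So suppose $\max A+2> n(A)$, which means $\max A\ge n(A)-1$ and hence $\max A=n(A)-1$. The idea is to replace the largest element by $n(A)$. Let $M=\max A$ and set $A'=(A\setminus\{M\})\cup\{n(A)\}$, so that $|A'|=t$.

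Next I check that $0,\dots,n(A)$ all lie in $A'+A'$.
\begin{itemize}
\item Any $x\le n(A)-1$ is written as $a+b$ with $a,b\in A$. If neither summand is $M$, this representation survives in $A'$.
\item If some summand equals $M=n(A)-1$, then $x\ge n(A)-1$, so $x=n(A)-1$ and the representation is $M+0$. For $t\ge 2$, I would instead write $n(A)-1=(n(A)-1-1)+1$ or something similar; this part is unclear to me. A cleaner alternative is to use $n(A)+0=n(A)$, together with the fact that $n(A)-1=M$ might itself need $M$.
\end{itemize}

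Given this difficulty, I would rather augment than replace. Take $A'=A\cup\{n(A)\}$ minus nothing; this gives size $t+1$, which does not help directly.

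Instead I would use the known bound approach. The main obstacle is exactly showing that $\max A\neq n(A)-1$. The case to exclude is $M=n(A)-1$ with $n(A)\notin A+A$. I would try to show that the set $A'=(A\setminus\{M\})\cup\{M+1\}$ satisfies $n(A')>n(A)$, which would contradict extremality. This requires checking $M\in A'+A'$ (via $(M+1)+$? no, that is too large) and $M+1\in A'+A'$ (via $(M+1)+0$, which works). So the crux is representing $M=n(A)-1$ without using $M$. I expect this to follow because $t\ge2$ and $1\in A$: if $M-1\in A\setminus\{M\}$, then $M=(M-1)+1$ works. Otherwise I would analyze further using $n(A)\ge 3$ and sums $a+b=M$ with $a,b<M$. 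This is the step that needs the real argument, and I would consult the standard treatment in \cite{zbMATH06272466} if the case analysis does not close.
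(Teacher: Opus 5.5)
Part (1) is fine. Part (2) has a genuine gap, in two places.

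First, you never justify the claim $A\subset[0,n(A)]$. You only show $n(A)\notin A$, and for an arbitrary finite set the containment is false (for $A=\{0,1,10\}$ one has $n(A)=3$). Proving $\max A<n(A)$ is exactly where extremality must enter, and it is where your replacement idea works cleanly:
\begin{itemize}
\item If $\max A\ge n(A)$, then in fact $\max A>n(A)$, because $n(A)\notin A$.
\item Hence no representation $x=a+b$ of any $x\in[0,n(A)-1]$ can use $\max A$, so $[0,n(A)-1]\subset(A\setminus\{\max A\})+(A\setminus\{\max A\})$.
\item The set $B=(A\setminus\{\max A\})\cup\{n(A)\}$ then has $|B|=t$ and $[0,n(A)]\subset B+B$ (as $n(A)=n(A)+0$).
\item So $n(B)>n(A)=\ovn(t)$, contradicting extremality.
\end{itemize}
This is the paper's argument.

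Second, the case $\max A=n(A)-1$, which you leave open and propose to settle by consulting the literature, needs no replacement at all. Since $1\in A$ by (1), $n(A)=\max A+1\in A+A$, contradicting the definition of $n(A)$. Put positively: once $\max A+1\le n(A)$ is known, the fact that $\max A+1=1+\max A\in A+A$ immediately gives $n(A)\ge\max A+2$.

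As it stands, your proposal proves neither $\max A<n(A)$ nor the exclusion of $\max A=n(A)-1$, so (2) is not established. You applied the right trick to the wrong case and missed the one-line use of $1\in A$.
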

\begin{proof}
    (1) is obvious. 
    (2) Since $1, \max A \in A$, we have $\max A+1 \in A+A$. 
    Thus, it is enough to show that $0,\dots,\max A \in A+A$, 
    i.e., $\max A +1\le n(A)$. 
    Assume by contradiction that $n(A)\le \max A$. 
    This implies that $\{0,\dots,n(A)-1\}\subset (A \setminus \{\max A\})+(A \setminus \{\max A\})$. 
    Set $B=(A \setminus\{\max A\}) \cup \{n(A)\}$. 
    Then, we obtain $n(A)+1\le n(B)$, which is a contradiction since $A$ is extremal. 
\end{proof}

\begin{theorem}\label{Main}
    Let $t\ge 2$ be an integer, and let $A \subset \N$ be an extremal finite set with $|A|=t$. 
    Set $S=S(n(A),A)$. Then the following hold:
    \begin{itemize}
        \item[(1)] $S$ is far-flung Gorenstein;
        \item[(2)] $s(S)=\t(S)=t$;
        \item[(3)] $\m(S)=\ovn(\t(S))$.
    \end{itemize}
\end{theorem}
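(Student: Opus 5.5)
The plan is to apply Corollary \ref{corollary3}(3) with $m=n(A)$, after checking that the pair $(n(A),A)$ is admissible for the construction of $S(m,A)$. Since $A$ is extremal with $|A|=t$, we have $n(A)=\ovn(t)$. We need $m\ge 2$ and $0\in A\subset[0,m-2]$.

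By Lemma \ref{lem_extremal}(1), $0\in A$. By Lemma \ref{lem_extremal}(2), $\max A\le n(A)-2$, so $A\subset[0,n(A)-2]$. Also $n(A)\ge \max A+2\ge 1+2=3\ge 2$, because $1\in A$ gives $\max A\ge 1$. Hence $S=S(n(A),A)$ is a well-defined numerical semigroup.

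Corollary \ref{corollary3}(3) then yields (1) and (2) directly: $S$ is far-flung Gorenstein and $s(S)=\t(S)=|A|=t$. The work behind that corollary is the bound $\max A+n(A)<2m$, which feeds Theorem \ref{theorem1} to rule out pseudo-Frobenius numbers in $[1,m-1]$. When $m=n(A)$ this bound reads $\max A<n(A)$, which again follows from Lemma \ref{lem_extremal}(2). The hypothesis $\max A+1\le n(A)$ of Corollary \ref{corollary3}(1) is likewise immediate from Lemma \ref{lem_extremal}(2). If I were writing this out rather than citing the corollary, I would record these two inequalities explicitly.

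For (3), Proposition \ref{construction}(1) gives $\m(S)=n(A)$. Extremality gives $n(A)=\ovn(|A|)=\ovn(t)$, and by (2) this equals $\ovn(\t(S))$. There is no real obstacle: everything reduces to earlier results. The only point needing care is verifying the admissibility condition $A\subset[0,n(A)-2]$, and that is exactly what Lemma \ref{lem_extremal}(2) provides.
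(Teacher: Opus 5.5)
Your proposal is correct and follows the paper's proof essentially verbatim. Like the paper, you use Lemma \ref{lem_extremal} to check that $n(A)\ge 2$ and $0\in A\subset[0,n(A)-2]$, invoke Corollary \ref{corollary3}(3) for (1) and (2), and conclude (3) from Proposition \ref{construction}(1) together with $n(A)=\ovn(t)$.
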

\begin{proof}
    By Lemma \ref{lem_extremal}, we have $n(A)\ge 2$ and $0\in A \subset [0,n(A)-2]$. 
    Hence, it follows from Corollary \ref{corollary3} (3) 
    that $S$ is a far-flung Gorenstein numerical semigroup with $s(S)=\t(S)=|A|=t$. 
    Therefore, $\m(S)=n(A)=\ovn(t)=\ovn(\t(S))$ by Proposition \ref{construction}.     
\end{proof}

%%%%%%%%%%%%%%%%%%%%%%%%%%%%%%%%%%%%%%%%%%%%%%%%%%%%%%%%%%%%%%%

\section{The case where $\ress>\l(S)$}\label{res(S)}

Recall that $\ress=|S\setminus \trK|$, $\l(S)=|\Ks \setminus S|$, and $S$ is far-flung Gorenstein if and only if $\ress=\n(S)$. 
Note that $\g(S)=\l(S)+\n(S)$. Then, the following question is known:
\begin{question}[{\cite[Question 2.3]{zbMATH07395083}, \cite[Question 1.1]{zbMATH07574809}}]\label{comparison}
    Given a numerical semigroup $S$, is it true that 
    \[
    \ress\le \l(S)?
    \]
\end{question}

The statement holds for $\t(S)\le 3$ (see \cite[Theorem 2.4]{zbMATH07574809}). 
However, there exists a counterexample for Question \ref{comparison} when $\t(S)=5$ (see \cite[Example 2.5]{zbMATH07574809}). 

\begin{remark}
    As mentioned in \cite[Example 2.5 and 4.4]{zbMATH07574809}, 
\[ 
S=\langle 13,14,15,16,17,18,21,23 \rangle
\]
is a far-flung Gorenstein numerical semigroup with $\t(S)=5$ and $\m(S)=13$ such that $\ress> \l(S)$.  
On the other hand, it is easy to check that $S=S(m,A)$, where $m=13$ and $A=\{0,1,3,5,6\}$. 
In this case, since $A$ is extremal, it follows from Theorem \ref{Main} that 
$S$ is a far-flung Gorenstein numerical semigroup with maximal reduced type. 
\end{remark}
In this section, we show that for any integer $t\ge 5$, there exists a numerical semigroup with $\t(S)=t$ such that $\ress>\l(S)$.

\begin{proposition}\label{example_Rohrbach}
    Let $t\ge4$ be an integer, and let $s=\lfloor \frac{t}{2} \rfloor$. 
    Set 
    \[
    A=
\begin{cases}
            \{0,1,\dots,s,2s,\dots,s^2\}                            & \quad(t \text{ is even})    ; \\
            \{0,1,\dots,s,2s,\dots,s^2,(s+1)s\} & \quad(t \text{ is odd}).
        \end{cases}
    \]
    Put $S=S(n(A),A)$.
    Then the following hold:
    \begin{itemize}
        \item [(1)] $|A|=t$;
        \item [(2)] 
        \[ n(A)=\begin{cases}
            s^2+s+1=\frac{t^2}{4} +\frac{t}{2}+1           & \quad(t \text{ is even}); \\
            s^2+2s+1=\frac{t^2}{4}+\frac{t}{2}+\frac{1}{4} & \quad(t \text{ is odd}). 
        \end{cases}
    \]
    In particular, $\max A+2\le n(A)$; 
    \item [(3)] If $t\ge 9$,  then $3(t-1)<n(A)$;  
    \item [(4)] $S$ is a far-flung Gorenstein numerical semigroup with $s(S)=\t(S)=t$. 
Moreover, if $t\ge 9$, then $\res(S)>\l(S)$. 
    \end{itemize}
\end{proposition}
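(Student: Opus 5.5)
The plan is to verify the four items in order, taking $A$ to be the classical ``$s$-ladder'' set: the block $\{0,1,\dots,s\}$ together with the multiples $2s,3s,\dots,s^2$ of $s$, plus $(s+1)s$ when $t$ is odd.

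\textbf{Step 1: the cardinality $|A|$.} The block $\{0,\dots,s\}$ has $s+1$ elements. The multiples $ks$ with $2\le k\le s$ are $s-1$ further elements, all distinct and larger than $s$ because $s\ge 2$. This gives $2s=t$ elements when $t$ is even, and adding $(s+1)s$ gives $2s+1=t$ when $t$ is odd. This proves (1).

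\textbf{Step 2: computing $n(A)$.} Let $N=s^2+s$ in the even case and $N=s^2+2s$ in the odd case; the claim is $N+1=n(A)$.
\begin{itemize}
\item[(a)] To show $[0,N]\subset A+A$, write $x=qs+r$ with $0\le r<s$. If $q\le s$, then $x=qs+r$ with $qs\in A$ (when $q\ge 2$; for $q\in\{0,1\}$ the element $qs$ lies in the block) and $r\in\{0,\dots,s\}$. The endpoint $x=s^2+s$ equals $s^2+s$ with both summands in $A$. In the odd case, $x=(s+1)s+r$ with $r\le s$ covers everything up to $s^2+2s$.
\item[(b)] To show $N+1\notin A+A$, compare $N+1$ against sums $a+b$ with $a\ge b$:
\begin{itemize}
\item[$\bullet$] In the even case, $N+1=s^2+s+1$. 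Two multiples of $s$ cannot sum to a number $\equiv 1 \pmod s$ unless $s=1$. A multiple plus a block element is at most $s^2+s$. Two block elements give at most $2s$.
\item[$\bullet$] In the odd case, $N+1=s^2+2s+1$. We have $(s+1)s+s=s^2+2s$, so using $(s+1)s$ as a summand falls one short. Sums of two multiples are $\equiv 0 \pmod s$, while $s^2+2s+1\equiv 1$. The other mixed sums are too small.
\end{itemize}
\end{itemize}
The closed forms in $t$ follow by substituting $s=t/2$ or $s=(t-1)/2$. Finally, $\max A$ equals $s^2$ (even) or $s^2+s$ (odd), so $\max A+2\le n(A)$ is immediate. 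This proves (2).

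\textbf{Step 3: comparing $n(A)$ with $3(t-1)$.} From the closed forms, $n(A)\approx t^2/4+t/2$, and $t^2/4+t/2+\tfrac14>3t-3$ is equivalent to $t^2-10t+13>0$. This holds for $t\ge 9$ since $81-90+13=4>0$. The even case has a larger constant and is easier. This proves (3).

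\textbf{Step 4: the far-flung Gorenstein property and the residue inequality.} Set $m=n(A)$. By (2), $0\in A\subset[0,m-2]$, and $m\ge 2$. Corollary \ref{corollary3} (3) then gives that $S$ is far-flung Gorenstein with $s(S)=\t(S)=|A|=t$. Since $S$ is far-flung Gorenstein, $\ress=\n(S)$. Proposition \ref{construction} (5) says $\n(S)>\l(S)$ exactly when $m>3(|A|-1)$, and this is item (3). Hence $\ress>\l(S)$ for $t\ge 9$, proving (4).

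The only step needing care is the non-representability of $n(A)$ in Step 2(b), particularly the odd case near $(s+1)s$. I expect to handle it by the residue-mod-$s$ argument together with the size bounds above.
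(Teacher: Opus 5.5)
Your proposal is correct and follows essentially the same route as the paper: you count $|A|$, show $[0,N]\subset A+A$, and rule out $N+1$ by divisibility by $s$. Your size-based case split is just an unpacked form of the paper's remark that both summands must be at least $s$, hence multiples of $s$. You then apply Corollary \ref{corollary3}~(3), Proposition \ref{construction}~(5), and $\ress=\n(S)$ exactly as the paper does. The only addition is that you make the quadratic inequality behind item (3) explicit, where the paper just says it follows from (2); there you should also note that $t^2-10t+13$ is increasing for $t\ge 5$, so checking $t=9$ suffices.
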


\begin{proof}
    (1) If $t$ is even, then $t=2s$, otherwise, $t=2s+1$. 
    This implies that $|A|=t$. 

    (2) First, we consider the case where $t$ is even. Then, we have $[0,s^2+s] \subset A+A$. 
    We show that $s^2+s+1 \notin A+A$. 
    Assume by contradiction that $s^2+s+1 \in A+A$, which implies that $s^2+s+1=a+b$ for some $a,b \in A$. 
    It is easy to see that $s\le a,b$. 
    Then, $s$ divides $s^2+s+1$, which is a contradiction. 
    Thus, $s^2+s+1 \notin A+A$. Hence, we have $n(A)=s^2+s+1$. 
    Since $t=2s$, we obtain $n(A)=\frac{t^2}{4} +\frac{t}{2}+1$. 
    We can use the same argument when $t$ is odd. 
    
    (3) The assertion follows from (2). 

    (4) By (1), (2), and Corollary \ref{corollary3} (3),  
    $S$ is a far-flung Gorenstein numerical semigroup with $s(S)=\t(S)=t$. 
    We consider the case $t\ge9$. Then $\n(S)>\l(S)$ by (1), (3), and Proposition \ref{construction}. 
    Hence, since $S$ is far-flung Gorenstein, we have $\ress>\l(S)$.  
\end{proof}

\begin{remark}
Proposition \ref{example_Rohrbach} is related to the lower bound for $\ovn(t)$ given by Rohrbach.   
\end{remark}

\begin{theorem}\label{theorem_example}
    Let $t\ge 5$ be an integer. Then there exists a finite set $A \subset \N$ with $|A|=t$ such that $3(t-1)<n(A)$. 
    Moreover, there exists a far-flung Gorenstein numerical semigroup $S$ with $s(S)=\t(S)=t$ such that  $\ress>\l(S)$.
\end{theorem}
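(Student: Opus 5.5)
Given a set $A\subset\N$ with $|A|=t$ and $3(t-1)<n(A)$, the second assertion follows formally from the results already proved, so the work is to construct such sets for every $t\ge5$. I would begin by upgrading $A$ to an extremal set. Take $A'$ extremal with $|A'|=t$. Then $n(A')=\ovn(t)\ge n(A)>3(t-1)$, and Lemma \ref{lem_extremal} gives $\max A'+2\le n(A')$. Now set $S=S(n(A'),A')$. By Theorem \ref{Main}, $S$ is far-flung Gorenstein with $s(S)=\t(S)=t$. By Proposition \ref{construction} (5), with $m=n(A')>3(|A'|-1)$, we get $\n(S)>\l(S)$. Since $S$ is far-flung Gorenstein, $\ress=\n(S)>\l(S)$, which is the required semigroup.

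It remains to produce $A$ with $|A|=t$ and $n(A)>3(t-1)$. For $t\ge9$ I would take $A$ as in Proposition \ref{example_Rohrbach}, since part (3) there gives exactly $3(t-1)<n(A)$. For $5\le t\le 8$ I would use the known values $\ovn(5)=13$, $\ovn(6)=17$, $\ovn(7)=21$, $\ovn(8)=27$. These exceed $12,15,18,21$ respectively, so any extremal set of size $t$ works. To keep the argument self-contained, I would rather exhibit explicit sets and check them by hand:
\begin{itemize}
\item[] $t=5$: $\{0,1,3,5,6\}$, which has $n=13$;
\item[] $t=6$: $\{0,1,2,3,7,11\}$, to be checked to give $n\ge16$;
\item[] $t=7$: $\{0,1,2,5,8,9,10\}$ or a similar set, with $n\ge19$ needed;
\item[] $t=8$: $\{0,1,2,3,4,8,12,16\}$, from the even pattern with $s=4$ in Proposition \ref{example_Rohrbach}, giving $n=21>21$?
\end{itemize}
The last one fails, since $21>21$ is false, and the $t=8$ odd-type pattern is not available.

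So the main obstacle is the small cases $6\le t\le 8$, where the families of Proposition \ref{example_Rohrbach} fall short of $3(t-1)$. There I would simply cite the known extremal values together with explicit extremal sets from \cite{zbMATH06272466}, for instance $\{0,1,3,7,12,20,30,44,65,80,96\}$-type tables. Concretely, I would search for sets of size $6,7,8$ reaching $16,19,22$ and verify $[0,n-1]\subset A+A$ directly. Lemma \ref{lemma2} shows that monotonicity alone does not suffice, since $\ovn(t)$ must beat a linear bound, so explicit sets are needed in each case. Once the sets are found, the check is a finite computation.
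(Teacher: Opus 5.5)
Your derivation of the second assertion from the first is correct. It differs slightly from the paper. You pass to an extremal set $A'$ and invoke Theorem~\ref{Main} together with Proposition~\ref{construction}(5), which is exactly the paper's closing corollary; this spares you the check $\max A+2\le n(A)$. The paper instead applies Corollary~\ref{corollary3}(3) directly to its witness sets, two of which are not extremal.

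Your sentence citing $\ovn(5),\dots,\ovn(8)=13,17,21,27$ already settles the first assertion for $5\le t\le 8$. So your proof is complete provided one accepts these tabulated values, which the paper also quotes. The paper's proof, by contrast, does not depend on the tables. It exhibits $A_5=\{0,1,3,5,6\}$, $A_6=\{0,1,3,5,7,8\}$, $A_7=\{0,1,3,5,6,8,12\}$ and $A_8=\{0,1,3,5,6,8,12,16\}$ with $n(A_t)=13,17,19,23$. The small cases thus become a finite hand check, and the bound $3(t-1)<\ovn(t)$ is a consequence rather than an input.

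Your attempt at a self-contained version does not work as written, so either drop it or repair it.
\begin{itemize}
\item[] For $t=6$, the set $\{0,1,2,3,7,11\}$ has sum set $[0,14]\cup\{18,22\}$. So $n=15=3(6-1)$ and the strict inequality fails.
\item[] The $t=8$ candidate fails, as you noticed.
\item[] The announced search for sets of size $6,7,8$ is never carried out, and the $11$-element set you quote is irrelevant here.
\end{itemize}
Your $t=7$ set is actually fine: $\{0,1,2,5,8,9,10\}+\{0,1,2,5,8,9,10\}=[0,20]$, so $n=21=\ovn(7)$. Adjoining $21$ to it gives an $8$-element set with sum set $[0,23]\cup\{26,29,30,31,42\}$, hence $n=24>21$. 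Together with the paper's $A_6$, this would make your argument independent of the literature values.
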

\begin{proof}
    By Proposition \ref{example_Rohrbach}, we only have to consider the case $t\in [5,8]$ as follows: 
    \begin{align*}
        & A_5:=  \{0,1,3,5,6\}, & n(A_5)=13 & \quad(t=5); \\
         & A_6:=  \{0,1,3,5,7,8\}, & n(A_6)=17 & \quad(t=6);  \\
         & A_7:=  \{0,1,3,5,6,8,12\}, & n(A_7)=19 & \quad(t=7);\\
         & A_8:=  \{0,1,3,5,6,8,12,16\}, & n(A_8)=23 & \quad(t=8).
    \end{align*}
    Note that $|A_t|=t$ for all $t\in [5,8]$. 
    We see that $\max A_t+2 \le n(A_t)$ and $3(t-1)<n(A_t)$ for all $t \in [5,8]$. 
    Hence, the same argument as in the proof of Proposition \ref{example_Rohrbach} (4) shows that 
    $S(n(A_t),A_t)$ is a far-flung Gorenstein numerical semigroup with $s(S(n(A_t),A_t))=\t(S(n(A_t),A_t))=t$ 
    such that $\res(S(n(A_t),A_t))>\l(S(n(A_t),A_t))$ for all $t \in [5,8]$.  
\end{proof}

\begin{corollary}
    Let $t\ge 5$ be an integer, and let $A \subset \N$ is an extremal finite set with $|A|=t$.  
    Then $3(t-1)<n(A)=\ovn(t)$. In particular, $S(n(A),A)$ is a far-flung Gorenstein numerical semigroup with $s(S(n(A),A))=\t(S(n(A),A))=t$ such that $\res(S(n(A),A))>\l(S(n(A),A))$.  
\end{corollary}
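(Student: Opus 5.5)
The inequality $3(t-1)<\ovn(t)$ is the heart of the matter; once it is established, the second assertion follows from results already in the paper. Since $A$ is extremal, $n(A)=\ovn(t)$ by definition, so it suffices to bound $\ovn(t)$ from below.

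For this I will use Theorem \ref{theorem_example}. It provides, for every $t\ge5$, a finite set $A'\subset\N$ with $|A'|=t$ and $3(t-1)<n(A')$. Since $\ovn(t)$ is the maximum of $n(\cdot)$ over $t$-element subsets of $\N$, we get
\[
3(t-1)<n(A')\le\ovn(t)=n(A).
\]
This gives the first assertion.

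For the ``in particular'' part, set $S=S(n(A),A)$. Theorem \ref{Main} applies directly because $A$ is extremal with $|A|=t\ge2$. It shows that $S$ is far-flung Gorenstein with $s(S)=\t(S)=t$, and that $\m(S)=n(A)$.

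It remains to compare $\ress$ with $\l(S)$. Lemma \ref{lem_extremal} gives $\max A+2\le n(A)$, so $0\in A\subset[0,n(A)-2]$ and Proposition \ref{construction} applies with $m=n(A)$. By Proposition \ref{construction}(5), the inequality $m=n(A)>3(|A|-1)=3(t-1)$ proved above yields $\n(S)>\l(S)$. Finally, since $S$ is far-flung Gorenstein, $\ress=\n(S)$, and therefore $\ress>\l(S)$.

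No step here is a real obstacle, because the substantive work was done in Theorem \ref{theorem_example}: Proposition \ref{example_Rohrbach} covers $t\ge9$ and explicit sets cover $t\in[5,8]$. The only care needed is to check that the hypotheses of Proposition \ref{construction} hold for $m=n(A)$, and Lemma \ref{lem_extremal} guarantees that.
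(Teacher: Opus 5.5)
Your proposal is correct and takes the same route the paper intends. The bound $3(t-1)<n(A')\le\ovn(t)=n(A)$ comes from Theorem \ref{theorem_example}, and the rest follows from Theorem \ref{Main}, Proposition \ref{construction}(5) (whose hypotheses you correctly check via Lemma \ref{lem_extremal}), and the identity $\ress=\n(S)$ for far-flung Gorenstein $S$.
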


Regarding Question \ref{comparison}, the remaining case is $\t(S)=4$ (see \cite[Question 2.6]{zbMATH07574809}). 
We conclude this section by introducing a counterexample to this question. 

\begin{example}\label{type4}
    Let $S=\langle 13, 20, 21, 22, 23, 24, 27, 28, 31 \rangle$. 
    Then, we can check that $\Ks \setminus S=\{6,8,9,19,29,30,32\}$ and $S\setminus\trK=\{0,13,20,21,22,23,24,26\}$. 
    Thus, we see that 
    \[
    \ress=8>7=\l(S). 
    \]
    We can also check that $\rPF(S)=\PF(S)=\{29,30,32,38\}$, i.e., $s(S)=\t(S)=4$.
    We remark that $S$ is not far-flung Gorenstein by Lemma \ref{lemma1}. 
\end{example}

%%%%%%%%%%%%%%%%%%%%%%%%%%%%%%%%%%%%%%%%%%%%%%%%%%%%%%%%%%%%%%%

\section*{Acknowledgments}
I would like to thank Professor Satoru Fukasawa for valuable comments and suggestions. 
Several computations appearing in this paper were performed using the NumericalSgps package \cite{NumericalSgps1.4.0} in GAP \cite{GAP4.12.1}.


\begin{thebibliography}{10}

\bibitem{NumericalSgps1.4.0}
M.~Delgado, P.~A. Garcia-Sanchez, and J.~Morais.
\newblock {NumericalSgps}, a package for numerical semigroups, {V}ersion 1.4.0 dev, 2024.
\newblock GAP package.

\bibitem{zbMATH03993709}
R.~Fr{\"o}berg, C.~Gottlieb, and R.~H{\"a}ggkvist.
\newblock On numerical semigroups.
\newblock {\em Semigroup Forum}, 35:63--83, 1987.

\bibitem{grigorescu2025farflunggorensteinnumericalsemigroup}
T.~I. Grigorescu.
\newblock The far-flung {Gorenstein} numerical semigroup rings of type 4.
\newblock Preprint, {arXiv}:2512.18370 [math.AC], 2025.

\bibitem{zbMATH07395083}
J.~Herzog, T.~Hibi, and D.~I. Stamate.
\newblock Canonical trace ideal and residue for numerical semigroup rings.
\newblock {\em Semigroup Forum}, 103(2):550--566, 2021.

\bibitem{zbMATH07574809}
J.~Herzog and S.~Kumashiro.
\newblock Upper bound on the colength of the trace of the canonical module in dimension one.
\newblock {\em Arch. Math.}, 119(3):237--246, 2022.

\bibitem{zbMATH07648549}
J.~Herzog, S.~Kumashiro, and D.~I. Stamate.
\newblock The tiny trace ideals of the canonical modules in {Cohen}-{Macaulay} rings of dimension one.
\newblock {\em J. Algebra}, 619:626--642, 2023.

\bibitem{zbMATH07423434}
C.~Huneke, S.~Maitra, and V.~Mukundan.
\newblock Torsion in differentials and {Berger}'s conjecture.
\newblock {\em Res. Math. Sci.}, 8(4):15, 2021.
\newblock Id/No 60.

\bibitem{zbMATH06272466}
J.~Kohonen and J.~Corander.
\newblock Addition chains meet postage stamps: reducing the number of multiplications.
\newblock {\em J. Integer Seq.}, 17(3):article 14.3.4, 13, 2014.

\bibitem{arXiv:2306.17069}
S.~Maitra and V.~Mukundan.
\newblock Extremal behavior of reduced type of one dimensional rings.
\newblock Preprint, {arXiv}:2306.17069 [math.{AC}], 2023.

\bibitem{zbMATH03023950}
H.~Rohrbach.
\newblock Ein {Beitrag} zur additiven {Zahlentheorie}.
\newblock {\em Math. Z.}, 42:1--30, 1937.

\bibitem{zbMATH05623301}
J.~C. Rosales and P.~A. Garc{\'{\i}}a-S{\'a}nchez.
\newblock {\em Numerical semigroups.}, volume~20 of {\em Dev. Math.}
\newblock Dordrecht: Springer, 2009.

\bibitem{zbMATH06223909}
N.~J.~A. Sloane.
\newblock {On}-{Line} {Encyclopedia} of {Integer} {Sequences}, {https://oeis.org/A123509}.

\bibitem{sugawara2025classificationsymmetricnumericalsemigroups}
A.~Sugawara.
\newblock Classification of almost symmetric numerical semigroups with maximal reduced type.
\newblock Preprint, {arXiv}:2512.17957 [math.AC], 2025.

\bibitem{GAP4.12.1}
{The GAP~Group}.
\newblock {GAP} {\textendash} {G}roups, {A}lgorithms, and {P}rogramming, {V}ersion 4.12.1, 2022.

\end{thebibliography}
\end{document}